\newtheorem{prop}{Proposition}[section]
\newtheorem{thm}[prop]{Theorem}
\newtheorem{cor}[prop]{Corollary}
\newtheorem{lem}[prop]{Lemma}
\theoremstyle{definition}
\newtheorem{que}[prop]{Question}
\newtheorem{expl}[prop]{Example}
\newtheorem*{claim*}{Claim}
\newcommand{\bP}{\mathbb{P}}
\newcommand{\bC}{\mathbb{C}}
\newcommand{\bQ}{\mathbb{Q}}
\newcommand{\bZ}{\mathbb{Z}}
\newcommand{\bN}{\mathbb{N}}
\newcommand{\bk}{\mathbbm{k}}
\newcommand{\cO}{\mathcal{O}}
\newcommand{\cF}{\mathcal{F}}
\newcommand{\fa}{\mathfrak{a}}
\newcommand{\Spec}{\mathbf{Spec}}
\newcommand{\Proj}{\mathbf{Proj}}
\newcommand{\Supp}{\mathrm{Supp}}
\newcommand{\Coef}{\mathrm{Coef}}
\newcommand{\sing}{\mathrm{sing}}
\numberwithin{equation}{section}
\title{Direct summands of klt singularities}
\author{Ziquan Zhuang}
\address{Department of Mathematics, Johns Hopkins University, Baltimore, MD 21218, USA}
\email{zzhuang@jhu.edu}
\address{Department of Mathematics, Statistics, and Computer Science, University of Illinois at Chicago, Chicago, IL 60607, USA}
\email{slyu@uic.edu}
\date{}
\begin{document}

\maketitle

\begin{abstract}
    We show that direct summands (or more generally, pure images) of klt type singularities are of klt type. As a consequence, we give a different proof of a recent result of Braun, Greb, Langlois and Moraga that reductive quotients of klt type singularities are of klt type.
\end{abstract}

\medskip

\emph{Unless otherwise stated, throughout the paper we work over an algebraically closed field $\bk$ of characteristic zero.}

\section{Introduction}

Given a morphism $Y=\Spec(B)\to X=\Spec(A)$ between affine algebraic varieties, we say that $X$ is a direct summand of $Y$ if $A$ is a direct summand of $B$ as $A$-modules. More generally, we say the morphism is pure if $A$ is a pure subring of $B$, i.e. $M\to M\otimes_A B$ is injective for any $A$-module $M$. Some interesting examples are GIT quotients by reductive groups and faithfully flat morphisms between normal varieties. The purpose of this note is to show that certain classes of singularities from the Minimal Model Program (see e.g. \cite{KM98}) are preserved under pure morphisms.

\begin{thm} \label{thm:main}
Let $f\colon Y\to X$ be a pure morphism between affine varieties. Assume that $Y$ is of klt type\footnote{This means there exists some effective $\bQ$-divisor $D$ on $Y$ such that the pair $(Y,D)$ is klt. In some literature this is also called potentially klt. More generally we say a pair $(X,\Delta)$ (where $\Delta$ is an effective $\bQ$-divisor on $X$) is of klt (resp. plt, resp. lc) type if locally there exists some effective $\bQ$-divisor $D$ such that $(X,\Delta+D)$ is klt (resp. plt, resp. lc).}. Then $X$ is also of klt type. In particular, direct summands of klt type singularities are of klt type.
\end{thm}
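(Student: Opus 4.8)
The assertion is local on $X$, so fix $x\in X$ and, after shrinking, write $X=\Spec A$, $Y=\Spec B$ with $A\hookrightarrow B$ a pure ring extension of domains. Purity implies $IB\cap A=I$ for every ideal $I$, so $f$ is surjective; and since $(Y,D_Y)$ is klt, $Y$ is normal, Cohen--Macaulay and has rational singularities. By Boutot's theorem a pure subring of a ring with rational singularities again has rational singularities; since moreover purity forces $X$ to be normal (a pure subring of a normal domain is normal), we conclude that $X$ is normal, Cohen--Macaulay with rational singularities. The real content of the theorem is therefore to upgrade ``rational'' to ``klt type'': to produce an effective $\bQ$-divisor $\Delta$ on $X$ with $(X,\Delta)$ klt --- which in particular forces $K_X+\Delta$ to be $\bQ$-Cartier, the part that does not come for free.

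The plan I would follow is reduction to positive characteristic. Spread out $A\hookrightarrow B$ (along with $D_Y$ and a log resolution of $(Y,D_Y)$) over a finitely generated $\bZ$-subalgebra and reduce modulo primes $p$: for a dense set of $p$ the reduction $A_p\hookrightarrow B_p$ is still pure, and since $(Y,D_Y)$ is a $\bQ$-Gorenstein klt pair, $B_p$ is strongly $F$-regular (reduction mod $p$ for klt pairs, then discard the boundary, which only enlarges the test ideal). Now invoke the theorem of Hochster--Huneke that a pure subring of a strongly $F$-regular ring is again strongly $F$-regular: $A_p$ is strongly $F$-regular for a dense set of $p$, i.e. $X$ is of strongly $F$-regular type. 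It then remains to pass back to characteristic zero, using the principle that a normal variety of strongly $F$-regular type is of klt type --- this requires the theory of (non-finitistic) test ideals for not-necessarily-$\bQ$-Gorenstein rings, as developed by Schwede--Tucker, Chiecchio--Enescu--Miller--Schwede and others.

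The main obstacle is exactly this last passage --- and, beneath it, the mismatch that ``$X$ is of klt type'' asserts the existence of a $\bQ$-Cartier boundary whereas strong $F$-regularity (like any Boutot-type vanishing) is intrinsic to $\cO_X$ and exhibits no such $\Delta$. Because the reduction argument needs $\Delta$ \emph{uniformly in $p$}, one is forced to construct it already in characteristic zero; I would do this by running a relative minimal model program on a log resolution of $(Y,D_Y)$ (legitimate since $(Y,D_Y)$ is klt), descending the resulting positivity to $X$ via purity to obtain an effective $\bQ$-divisor $\Delta$ with $K_X+\Delta$ $\bQ$-Cartier, and then verifying $(X,\Delta)$ is klt --- either through the characteristic-$p$ comparison above or by a direct discrepancy estimate. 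A strictly characteristic-zero alternative would transfer the triviality of the multiplier ideal of $(Y,D_Y)$ to $X$ in Boutot's style, along the splitting of $\cO_X$ off $Rf_*$ supplied by purity; there the same difficulty resurfaces as the non-equidimensionality of $f$ --- for a GIT quotient $f$ lowers dimension, so there is no formula $K_Y=f^*K_X+(\text{ramification})$ --- which must be handled either via the relative dualizing complex and its trace, or by Stein-factoring $f$ into a generically finite pure part (finite-cover descent of klt type) and a connected-fibre contraction (the canonical bundle formula), the latter demanding that the general fibres of that contraction be of Fano type over the base, which is once more where purity enters. In all routes, manufacturing the boundary $\Delta$ on $X$ is what I expect to be the technical heart.
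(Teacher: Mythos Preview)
Your proposal is not a proof: the characteristic~$p$ route you outline has a genuine gap at the step ``a normal variety of strongly $F$-regular type is of klt type''. In the $\bQ$-Gorenstein case this is a theorem (Hara--Watanabe, Schwede), and the paper cites \cite{Sch-pure-image-klt} for exactly that special case. But when $K_X$ is not $\bQ$-Cartier --- which is the whole point, as you recognise --- the implication ``strongly $F$-regular type $\Rightarrow$ klt type'' is the (open) Schwede--Smith conjecture. The non-finitistic test ideal machinery you invoke does not produce a $\bQ$-Cartier boundary $\Delta$; it produces an ideal, and the passage from trivial test ideal to the \emph{existence} of a klt boundary is precisely what is not known. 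The paper itself flags this in the introduction as the obstruction to a characteristic~$p$ proof. Your fallback suggestions (run an MMP on $Y$ and ``descend positivity via purity'', or Stein-factor $f$ and apply a canonical bundle formula) are too vague to be assessed: purity gives no pushforward for divisors, $f$ is neither proper nor equidimensional, and there is no canonical bundle formula available for a bare pure morphism.

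The paper's approach is entirely different and purely in characteristic zero. Rather than manufacture a boundary on $X$ directly, it passes to the small $\bQ$-Gorenstein modification $X':=\Proj_X R(X,-K_X)$ and proves two things: (i) $R(X,-K_X)$ is finitely generated, because it is a graded pure subring of $R(f^{-1}(U),f^*K_U)$, and the latter is finitely generated since $Y$ is of klt type (BCHM); (ii) $X'$ has klt singularities. For (ii), one covers $X'$ by affines $U_1$ and, crucially, replaces $Y$ by $V_1:=\Spec H^0(\cO_V)$ where $V=f^{-1}(U_1\cap X_{\mathrm{sm}})$: this is still of klt type and still maps purely to $U_1$, but now the preimage of the singular locus has codimension $\ge 2$. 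With that codimension condition in hand, one takes the index-one cover of $K_{U_1}$, checks purity is preserved, and reduces to Boutot's theorem (rational $+$ Gorenstein $=$ klt). The existence of the boundary on $X$ then drops out of the criterion ``$X$ is of klt type $\iff$ $R(X,-K_X)$ is finitely generated and its $\Proj$ is klt''. The two moves you are missing are the $\Proj$-modification (which sidesteps the non-$\bQ$-Gorenstein problem) and the replacement $Y\rightsquigarrow \Spec H^0(\cO_V)$ (which sidesteps the non-equidimensionality).
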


This refines Boutot's theorem \cite{Bou-summand-rational} that pure images of rational singularities are rational. We also prove an analogous statement in the plt case, see Theorem \ref{thm:plt}. As immediate applications, we deduce that images of klt type singularities under equidimensional morphisms are of klt type (Corollary \ref{cor:equidim image}), and give a new proof of a recent result of Braun, Greb, Langlois and Moraga \cite{BGLM-reductive-quotient} that reductive quotients of klt type singularities are of klt type.

\begin{cor} \label{cor:reductive quotient}
Let $X=\Spec(A)$ be an affine variety of klt type and let $G$ be a reductive group acting on $X$. Then the quotient $X/\!/G=\Spec(A^G)$ is also of klt type.
\end{cor}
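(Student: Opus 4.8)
The plan is to exhibit $X/\!/G$ as a direct summand of $X$ in the sense of the introduction and then invoke Theorem \ref{thm:main}; essentially all the substance lies in that theorem, so the corollary reduces to checking two standard inputs from geometric invariant theory.

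First, I would recall that because $G$ is reductive and $\mathrm{char}\,\bk=0$, the invariant subring $A^G\subseteq A$ is a finitely generated $\bk$-algebra (Hilbert--Nagata), so $X/\!/G:=\Spec(A^G)$ is an affine scheme of finite type over $\bk$; it is integral since $A^G$ is a subring of the domain $A$, and it is normal because an element of $\mathrm{Frac}(A^G)$ that is integral over $A^G$ is \emph{a fortiori} integral over $A$, hence lies in $A$ (as $A$ is normal) and, being $G$-invariant, lies in $A^G$. Thus $X/\!/G$ is a normal affine variety, and the inclusion $A^G\hookrightarrow A$ induces the quotient morphism $\pi\colon X\to X/\!/G$.

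Second, and this is the crux of the reduction, I would verify that $\pi$ is pure; in fact $A^G$ is a direct summand of $A$ as an $A^G$-module. This is precisely the Reynolds operator: complete reducibility of $G$-representations in characteristic zero furnishes a $G$-equivariant, $A^G$-linear projection $R\colon A\to A^G$ onto the trivial isotypic component, which restricts to the identity on $A^G$. Hence $A=A^G\oplus\ker R$ as $A^G$-modules, so $\pi$ is a direct summand morphism, in particular pure.

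With $\pi\colon X\to X/\!/G$ established as a pure morphism between affine varieties and $X$ of klt type by hypothesis, Theorem \ref{thm:main} immediately yields that $X/\!/G$ is of klt type. I do not expect a genuine obstacle here beyond Theorem \ref{thm:main} itself; the only points requiring care are the two GIT facts above, and in particular the assertion that the Reynolds operator is $A^G$-linear (not merely $\bk$-linear), which is what upgrades the classical decomposition of $G$-representations to an $A^G$-module splitting.
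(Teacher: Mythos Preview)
Your proposal is correct and follows essentially the same approach as the paper: the paper's proof simply notes that since $G$ is reductive and $\mathrm{char}(\bk)=0$, $A^G$ is a direct summand of $A$, and then invokes Theorem~\ref{thm:main}. You have spelled out more of the standard GIT background (finite generation of $A^G$, normality, the Reynolds operator and its $A^G$-linearity), but the core argument is identical.
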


We refer to \cite{BGLM-reductive-quotient} for several further applications of the above result.

Theorem \ref{thm:main} is suggested by positive characteristic considerations: the characteristic $p>0$ analog of klt singularities is the class of strongly $F$-regular singularities, which by definition is preserved by taking direct summands (the Hochster-Roberts theorem \cite{HR-quotient-CM} that reductive quotient singularities are Cohen-Macaulay was based on a similar circle of ideas). In fact, several special cases of Theorem \ref{thm:main}, e.g. when $X$ and $Y$ come from section rings of Mori dream spaces \cite{GOST-Fano-type-Cox-ring}, or when $X$ is $\bQ$-Gorenstein \cite{Sch-pure-image-klt}, have been established by using the connection between klt and strongly $F$-regular singularities. Our proof of Theorem \ref{thm:main} does not rely on the theory of $F$-singularities (in particular we also give a new proof of Schoutens' result \cite{Sch-pure-image-klt}), but it would still be interesting to find a proof of Theorem \ref{thm:main} through the conjecture of Schwede and Smith (see \cite{SS-F-reg-type-log-Fano}*{Section 7}) that strongly $F$-regular type singularities are also of klt type. We also expect the log canonical version of Theorem \ref{thm:main} to hold (see Question \ref{q:lc case}, c.f. \cite{BGLM-reductive-quotient}*{Question 8.5}, \cite{GM-DuBois-pure-subring}*{Corollary B}), although our current method does not seem to apply to the lc setting.

\subsection*{Acknowledgement}

The author is partially supported by the NSF Grants DMS-2240926, DMS-2234736, a Clay research fellowship, as well as a Sloan fellowship. He would like to thank Lukas Braun, Javier Carvajal-Rojas, Shiji Lyu, Linquan Ma, Joaqu\'in Moraga, Karl Schwede, Kevin Tucker and Chenyang Xu for helpful discussions and comments. He also wants to thank the anonymous referee for careful reading of the manuscript and several helpful suggestions.

\section{Proof}




\subsection{Some preliminary observations}

We start by collecting some preliminary results that will be useful in the proof of Theorem \ref{thm:main}. First we recall a criterion for purity.

\begin{lem}[\cite{HR-Frob-purity}*{Corollary 5.3}] \label{lem:purity criterion}
Let $A$ be a Noetherian ring which is subring of $B$. Then $A$ is a pure subring of $B$ if and only if $A$ is a direct summand, as an $A$-module, of every finitely generated $A$-submodule of $B$ that contains it.
\end{lem}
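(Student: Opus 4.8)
The plan is to prove the two implications separately, the forward one resting on the standard behaviour of purity under passage to finitely presented quotients, and the converse on a filtered-colimit argument.

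For the ``only if'' direction, suppose $A$ is a pure subring of $B$ and let $N$ be a finitely generated $A$-submodule of $B$ containing $A$. I would first note that the inclusion $A\hookrightarrow N$ is again pure: for every $A$-module $M$, the map $M=M\otimes_A A\to M\otimes_A B$ factors as $M\otimes_A A\to M\otimes_A N\to M\otimes_A B$, and since the composite is injective by purity of $A\subseteq B$, the first arrow is injective as well. Hence $0\to A\to N\to N/A\to 0$ is a pure exact sequence of $A$-modules. Because $A$ is Noetherian and $N$ is finitely generated, $N/A$ is finitely presented, and I would then apply the classical fact that a pure exact sequence with finitely presented cokernel splits (concretely, purity makes $\Hom_A(N/A,N)\to\Hom_A(N/A,N/A)$ surjective, so $\mathrm{id}_{N/A}$ lifts to a section). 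Thus $A$ is a direct summand of $N$.

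For the ``if'' direction, assume $A$ is a direct summand of every finitely generated $A$-submodule of $B$ containing it, and let $M$ be an arbitrary $A$-module with $x\in M$ such that $x\otimes 1=0$ in $M\otimes_A B$. The finitely generated $A$-submodules of $B$ containing $A$ form a directed system with union $B$ (for $b\in B$, the submodule $A+Ab$ is one such), so $M\otimes_A B=\varinjlim_N M\otimes_A N$ and therefore $x\otimes 1=0$ in $M\otimes_A N$ for some such $N$. By hypothesis there is an $A$-linear retraction $\rho\colon N\to A$ with $\rho|_A=\mathrm{id}_A$; applying $M\otimes_A-$ yields a retraction $M\otimes_A N\to M$ of the natural map $M\to M\otimes_A N$, so that map is injective and hence $x=0$. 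As $M$ and $x$ were arbitrary, $A$ is pure in $B$.

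I expect the only real subtlety to be in the ``only if'' direction: one must check that purity genuinely descends along the factorization through $N$, and one must invoke the splitting lemma for pure exact sequences with finitely presented cokernel --- which is also the one spot where Noetherianity of $A$ is used, namely to upgrade finite generation of $N/A$ to finite presentation. The converse direction is, by contrast, routine bookkeeping with filtered colimits.
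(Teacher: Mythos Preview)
Your argument is correct. Both directions are handled cleanly: the ``only if'' direction uses the standard characterization that a pure short exact sequence with finitely presented cokernel splits (and Noetherianity of $A$ is precisely what upgrades finite generation of $N/A$ to finite presentation), while the ``if'' direction is the routine filtered-colimit argument you give, and indeed does not require $A$ to be Noetherian.

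There is nothing to compare against here: the paper does not supply a proof of this lemma but simply cites \cite{HR-Frob-purity}*{Corollary 5.3}. Your write-up is a faithful reconstruction of the Hochster--Roberts argument. It is also worth noting that your observation about where the Noetherian hypothesis enters is exactly what the paper relies on later: in the proof of Lemma~\ref{lem:purity preserved} the authors explicitly remark that the ``if'' direction of this criterion is applied to rings such as $H^0(\cO_U)$ that may fail to be Noetherian, so your care in isolating the hypothesis is well placed.
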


Eventually the goal is to reduce Theorem \ref{thm:main} to the case when $X$ is Gorenstein, as \cite{Bou-summand-rational} has shown that pure images of rational singularities remain rational, and for Gorenstein singularities being rational is equivalent to being klt. The next result will provide the basis for performing various reduction steps. 





\begin{lem} \label{lem:purity preserved}
Let $f\colon Y=\Spec(B)\to X=\Spec(A)$ be a pure morphism between affine Noetherian schemes. Let $\fa\subseteq A$ be an ideal, let $U\subseteq X$ be an open subset, let $V=f^{-1}(U)\subseteq Y$, and let $L$ be a line bundle on $U$. Then
\begin{enumerate}
    \item $A/\fa$ is a pure subring of $B/\fa B$.
    \item $H^0(\cO_U)$ is a pure subring of $H^0(\cO_V)$.
    \item $\bigoplus_{m\in\bN} H^0(U,\cO_U(mL))$ is a pure subring of $\bigoplus_{m\in\bN} H^0(V,\cO_V(mf^*L))$.
\end{enumerate}
\end{lem}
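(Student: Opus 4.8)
Part (1) is a formal consequence of the stability of purity under arbitrary base change, a fact that will also be used in the other two parts. For any $A/\fa$-module $M$ one has $M\otimes_{A/\fa}(B/\fa B)\cong M\otimes_A B$, and $M\to M\otimes_A B$ is injective because $M$ is in particular an $A$-module and $A\to B$ is pure; the case $M=A/\fa$ gives in particular the injectivity of $A/\fa\to B/\fa B$.

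For (2) and (3) the plan is to exhibit both ring maps as $H^0$ of a pulled-back quasi-coherent sheaf and to reduce them to one common statement. Since $f$ is affine, $f_*\cO_Y$ is the quasi-coherent $\cO_X$-algebra associated to $B$; restricting to $U$ gives $H^0(\cO_V)=H^0(U,(f_*\cO_Y)|_U)$, and $H^0(\cO_U)\to H^0(\cO_V)$ is $H^0(U,-)$ applied to $\cO_U\to(f_*\cO_Y)|_U$. For (3), form the affine morphism $\pi\colon W=\Spec_U\big(\bigoplus_{m\ge0}\cO_U(mL)\big)\to U$; then $H^0(W,\cO_W)=\bigoplus_m H^0(U,\cO_U(mL))$, and since $U\hookrightarrow X$ is a monomorphism we have $W\times_XY=W\times_UV$, whence $H^0(W\times_XY,\cO)=\bigoplus_m H^0(V,\cO_V(mf^*L))$, the comparison map being induced by the base change of $f$ along $W\to U\to X$. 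So both (2) and (3) follow from the claim: \emph{for any morphism $h\colon T\to X$ with $T$ quasi-compact and quasi-separated, and $T_Y:=T\times_XY$ with first projection $p\colon T_Y\to T$, the ring map $H^0(T,\cO_T)\to H^0(T_Y,\cO_{T_Y})$ is pure} (apply it with $T=U$ and with $T=W$).

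To prove the claim I would first put the hypothesis in sheaf-theoretic form: $\cO_X\to f_*\cO_Y$ is universally injective, i.e. $\cF\to\cF\otimes_{\cO_X}f_*\cO_Y$ is injective for every quasi-coherent $\cF$ on $X$. Pulling back along $h$ and using $p_*\cO_{T_Y}\cong h^*(f_*\cO_Y)$ (cohomology and base change for the affine $f$) gives that $\mathcal{G}\to\mathcal{G}\otimes_{\cO_T}p_*\cO_{T_Y}$ is injective for every quasi-coherent $\mathcal{G}$ on $T$ — on an affine open $\Spec C\subseteq T$ over $X$ this is $N\to N\otimes_AB$, injective since $A\to B$ is pure. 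Next, reduce the purity of $R:=H^0(T,\cO_T)\to S:=H^0(T_Y,\cO_{T_Y})$ to finitely presented $R$-modules $M=\operatorname{coker}(R^a\xrightarrow{\phi}R^b)$ by the usual filtered-colimit argument. The entries of $\phi$ lie in $R=H^0(T,\cO_T)$, so $\phi$ also defines a map of sheaves $\cO_T^a\to\cO_T^b$; with $\mathcal{B}:=p_*\cO_{T_Y}$, one checks locally (again via purity of $A\to B$, which gives $M_C\to M_C\otimes_AB$ injective) that its image sheaf $\mathcal{K}:=\operatorname{im}(\phi\colon\cO_T^a\to\cO_T^b)$ equals $\cO_T^b\cap\operatorname{im}(\phi\colon\mathcal{B}^a\to\mathcal{B}^b)$ inside $\mathcal{B}^b$. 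Taking global sections, any $v\in R^b$ with $v\in\phi(S^a)$ then lies in $H^0(T,\mathcal{K})$, and one wants to deduce $v\in\phi(R^a)$.

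The main obstacle is this last implication. When $T$ is not affine, $H^0(T,-)$ is only left exact, so lifting $v$ through $\cO_T^a\xrightarrow{\phi}\mathcal{K}$ on global sections is obstructed by a class in $H^1(T,\mathcal{Z})$ with $\mathcal{Z}:=\ker(\phi\colon\cO_T^a\to\cO_T^b)$; this class becomes trivial after pulling back to $T_Y$ (that is the content of $v\in\phi(S^a)$), and forcing it to vanish already over $T$ means keeping track of the relevant $H^1(T,-)$ together with the functors $\operatorname{Tor}_*^A(-,B)$ that measure the non-flatness of the pure map $A\to B$ — this is where I expect the genuine work to be. If a clean general argument is not available, the fallback is to reduce to the case that $T$ is affine: for (2) this holds when $U$ is affine, or when $X$ is normal with $\codim(X\setminus U)\ge2$, in which case $H^0(\cO_U)=A$ and the map is literally the pure map $A\to B$; similarly for (3) — whereupon all the higher cohomology vanishes and the argument closes at once.
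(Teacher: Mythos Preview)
Your argument for (1) is fine and matches the paper. The difficulty is entirely in (2) and (3), and there you have a genuine gap: you correctly isolate the $H^1$-obstruction to lifting $v\in H^0(T,\mathcal{K})$ through $\cO_T^a\twoheadrightarrow\mathcal{K}$, but you do not overcome it. Your fallback does not close the gap either. Even when $X$ is normal and $\codim(X\setminus U)\ge 2$, so that $H^0(\cO_U)=A$, the target ring $H^0(\cO_V)=H^0(U,\widetilde{B}|_U)$ is typically \emph{strictly larger} than $B$ (the $A$-module $B$ need not be $S_2$, and $f^{-1}(X\setminus U)$ may well be a divisor), so the map in question is not ``literally $A\to B$'' and you are back to needing purity of $A$ in a ring you do not control.

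The paper avoids the cohomological obstruction altogether by switching from the injectivity definition of purity to the Hochster--Roberts splitting criterion (Lemma~\ref{lem:purity criterion}): $A\subseteq B$ is pure iff $A$ is a direct summand of every finitely generated $A$-submodule of $B$ containing it. The point is that a finitely generated $H^0(\cO_U)$-submodule $N\subseteq H^0(\cO_V)=H^0(U,(f_*\cO_Y)|_U)$ is contained in $H^0(U,\cF|_U)$ for some \emph{coherent} subsheaf $\cO_X\subseteq\cF\subseteq f_*\cO_Y$; since $X$ is affine, $\cF$ corresponds to a finitely generated $A$-submodule of $B$, and the criterion produces an $A$-linear retraction $\cF\to\cO_X$ \emph{globally on $X$}. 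Restricting this sheaf map to $U$ and taking $H^0$ gives a retraction $H^0(U,\cF|_U)\to H^0(\cO_U)$, hence $N\to H^0(\cO_U)$, with no $H^1$ in sight. The same trick, with $\cF$ replaced by $R(\cF)=\bigoplus_m H^0(U,\cF\otimes\cO_U(mL))$, handles (3). In short: build the splitting on the affine $X$ where it is free, then restrict; do not try to build it on $U$.
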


\begin{proof}
\cite{HR-quotient-CM}*{Lemma 6.2} gives (1). To see (2), note that any finitely generated $H^0(\cO_U)$-submodule $N$ of $H^0(\cO_V)=H^0(U,f_*\cO_V)$ is contained in $H^0(U,\cF|_U)$ for some coherent sheaf $\cF\subseteq f_*\cO_Y$. Since $A$ is a pure subring of $B$, by Lemma \ref{lem:purity criterion} we get a spitting map $p\colon \cF\to \cO_X$, which induces a splitting $N\subseteq H^0(U,\cF|_U)\to H^0(\cO_U)$. Another application of Lemma \ref{lem:purity criterion} then gives (2) (note that \emph{a priori} $H^0(\cO_U)$ may fail to be Noetherian, but the Noetherian condition is not needed for the ``if'' direction of Lemma \ref{lem:purity criterion}; the same remark applies to the proof of part (3) below).


For any quasi-coherent sheaf $\cF$ on $X$, write $R(\cF):=\bigoplus_{m\in\bN} H^0(U,\cF\otimes\cO_U(mL))$. By projection formula, we have $H^0(V,\cO_V(mf^*L))=H^0(U,f_*\cO_V\otimes \cO_U(mL))$. Thus the two rings in part (3) are $R(\cO_X)$ and $R(f_*\cO_Y)$. The proof of (3) then proceeds just as in (2), noting that any finitely generated $R(\cO_X)$-submodule of $R(f_*\cO_Y)$ is contained in $R(\cF)$ for some coherent sheaf $\cF\subseteq f_*\cO_Y$.
\end{proof}

\subsection{A special case}

We next consider a special case of Theorem \ref{thm:main}.

\begin{lem} \label{lem:special case - Q-Gor base}
Let $f\colon Y\to X$ be a pure morphism between affine normal varieties. Assume that $Y$ is of klt type, $K_X$ is $\bQ$-Cartier, and $f^{-1}(X_{\sing})$ has codimension at least two in $Y$. Then $X$ has klt singularities.
\end{lem}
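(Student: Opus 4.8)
The plan is to reduce to Boutot's theorem by passing to a $\bQ$-Gorenstein cover and invoking the fact that, for $\bQ$-Gorenstein rational singularities, rationality is equivalent to being klt. Since $Y$ is of klt type, $Y$ has rational singularities; by Boutot's theorem \cite{Bou-summand-rational}, applied to the pure morphism $f$, the base $X$ has rational singularities too. It remains to upgrade ``rational'' to ``klt'' for $X$. Because $K_X$ is $\bQ$-Cartier, it suffices to show $X$ has rational singularities \emph{and} is $\bQ$-Gorenstein, which together are well known to imply klt (e.g. \cite{KM98}); so in fact once rationality of $X$ is in hand the lemma is essentially immediate. The role of the hypothesis that $f^{-1}(X_{\sing})$ has codimension $\ge 2$ in $Y$ is to guarantee that the purity and normality pass down cleanly and that no correction from the singular locus of $X$ enters; concretely, it lets us work on the big open locus $X_{\mathrm{reg}}$ (or more precisely $X\setminus X_{\sing}$), pull everything back under $f$, and use that its preimage is a big open subset of $Y$ so that divisorial data extends.

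More concretely, I would argue as follows. Set $U = X\setminus X_{\sing}$ and $V = f^{-1}(U)$; by hypothesis $Y\setminus V$ has codimension $\ge 2$ in $Y$. By Lemma \ref{lem:purity preserved}(2), $H^0(\cO_U)$ is a pure subring of $H^0(\cO_V)$, and since $X$, $Y$ are normal these recover $A$ and $B$ respectively. Now $Y$ of klt type implies $Y$ has rational singularities, hence (Boutot) $X$ has rational singularities. Since $K_X$ is $\bQ$-Cartier by assumption, $X$ is $\bQ$-Gorenstein with rational singularities, and therefore $X$ is klt.

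The main obstacle I anticipate is not the logical skeleton above — which is short — but making sure the hypothesis ``$f^{-1}(X_{\sing})$ has codimension $\ge 2$ in $Y$'' is genuinely used in the right place, since without $\bQ$-Gorenstein-ness of $X$ one cannot even speak of discrepancies. In the present lemma $K_X$ is assumed $\bQ$-Cartier, so the codimension hypothesis is presumably needed to propagate the klt-type structure (i.e. the auxiliary boundary divisor $D$ on $Y$) or to control the behaviour of $f$ over $X_{\sing}$ when this lemma is later bootstrapped; I would double-check whether it is needed here at all, or only to set up the reduction in the proof of Theorem \ref{thm:main}. If a direct ``rational $+$ $\bQ$-Gorenstein $\Rightarrow$ klt'' citation is deemed too cheap, the fallback is to exhibit an explicit klt boundary on $X$ by pushing forward or descending a suitable multiple of the boundary witnessing klt-type on $Y$ along $f$ and using purity to control its log discrepancies.
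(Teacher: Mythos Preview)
There is a genuine gap: the implication ``$\bQ$-Gorenstein $+$ rational singularities $\Rightarrow$ klt'' that you invoke is \emph{false}. The correct statement (\cite{KM98}*{Corollary 5.24}) requires $K_X$ to be \emph{Cartier}, not merely $\bQ$-Cartier. A standard counterexample is the affine cone $X$ over an Enriques surface $S$ with respect to an ample line bundle $L$: here $2K_S\sim 0$ so $K_X$ is $2$-torsion (hence $\bQ$-Cartier of index $2$), and since $h^i(S,\cO_S)=0$ for $i>0$ the cone has rational singularities; yet the exceptional divisor over the vertex has discrepancy $-1$, so $X$ is strictly log canonical, not klt. Your argument therefore proves only that $X$ has rational singularities, which is Boutot's theorem and no more.

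The paper closes this gap exactly by passing to the index one cover $X'\to X$, where $K_{X'}$ \emph{is} Cartier, so that ``rational $\Rightarrow$ canonical $\Rightarrow$ klt'' is legitimate on $X'$, and then descending along the quasi-\'etale map $X'\to X$. This is where the codimension hypothesis you were unsure about is genuinely used: it guarantees that the base change $Y'\to Y$ of the index one cover is quasi-\'etale (\'etale over $V=f^{-1}(X_{\mathrm{reg}})$, which is big in $Y$), so that $Y'$ is again of klt type and hence rational; and it is needed to identify $H^0(\cO_{Y'})$ with the algebra built from sections over $V$, so that Lemma~\ref{lem:purity preserved}(3) yields purity of $Y'\to X'$. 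Without the codimension condition, $Y'$ could acquire worse singularities along the ramification and the whole reduction collapses.
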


Note that the above statement, even without the assumption that $f^{-1}(X_{\sing})$ has codimension at least two in $Y$, has been proved by Schoutens \cite{Sch-pure-image-klt} using the techniques of $F$-singularities. Here we give a direct proof. In the end, our argument will also give a new proof of Schoutens's result that does not rely on $F$-singularities.

\begin{proof}
Let $x\in X$ be a closed point. The statement is local on $X$, so after shrinking $X$ we may assume that $\cO_X(rK_X)\cong \cO_X$ where $r$ is the index of $K_X$ at $x$. Let $s\in H^0(X,\cO_X(rK_X))$ be a nowhere vanishing section, let $\pi\colon X'\to X$ be the corresponding index one cover \cite{KM98}*{Definition 5.19}, and let $Y'$ be the normalization of the main components of $X'\times_X Y$. For simplicity, we also denote the map $Y'\to Y$ by $\pi$. Let $U$ be the smooth locus of $X$, let $V=f^{-1}(U)$, and let $U'$ (resp. $V'$) be the preimage of $U$ (resp. $V$) in $X'$ (resp. $Y'$). Then $\pi$ is \'etale over both $U$ and $V$, the map $V'\to V$ is the cyclic cover corresponding to the section $f^*s\in H^0(V,\cO_V(rf^*K_U))$, and we have
\[
\pi_*\cO_{U'} = \left( \bigoplus_{m\in\bN} \cO_U(mK_U)\cdot t^m\right)/(st^r-1),\,\, \pi_*\cO_{V'} = \left( \bigoplus_{m\in\bN} \cO_V(mf^*K_U)\cdot t^m\right)/(st^r-1).
\]
By assumption, the complement of $U$ (resp. $V$) in $X$ (resp. $Y$) has codimension at least two. It follows that
\begin{align*}
    H^0(\cO_{X'}) = H^0(U,\pi_*\cO_{U'}) & = \left( \bigoplus_{m\in\bN} H^0(\cO_U(mK_U))\cdot t^m\right)/(st^r-1), \\
    H^0(\cO_{Y'}) = H^0(V,\pi_*\cO_{V'}) & = \left( \bigoplus_{m\in\bN} H^0(\cO_V(mf^*K_U))\cdot t^m\right)/(st^r-1).
\end{align*}
Combined with Lemma \ref{lem:purity preserved}(3) and Lemma \ref{lem:purity preserved}(1), we see that the morphism $Y'\to X'$ is pure. Note that $Y'$ may have several connected components. Nonetheless, by \cite{HR-quotient-CM}*{Corollary 6.12} and the fact that $\pi^{-1}(x)$ consists of a single point in $X'$, we know that after possible shrinking $X$ around $x$, there exists some component (say $Y'_1$) of $Y'$ such that $Y'_1\to X'$ is pure. As $Y'_1\to Y$ is quasi-\'etale (it is \'etale over $V$), we see that $Y'_1$ is of klt type by \cite{KM98}*{Proposition 5.20}, hence it has rational singularities by \cite{KM98}*{Theorem 5.22}. By \cite{Bou-summand-rational}, this implies that its pure image $X'$ also has rational singularities. By construction, $K_{X'}$ is Cartier, thus \cite{KM98}*{Corollary 5.24} yields that $X'$ has klt singularities. As $X'\to X$ is quasi-\'etale, another application of \cite{KM98}*{Proposition 5.20} gives the desired result that the singularities of $X$ are klt.
\end{proof}




\subsection{A klt type criterion}

To deduce Theorem \ref{thm:main} from Lemma \ref{lem:special case - Q-Gor base}, two difficulties lie ahead: the image $X$ is not $\bQ$-Gorenstein in general, and even if it is $\bQ$-Gorenstein, the preimage $f^{-1}(X_{\sing})$ of the singular locus may well contain a divisor in $Y$. We will address the latter issue at the very end: the point is that after a small birational modification one can ``throw away'' the divisorial part of $f^{-1}(X_{\sing})$ from $Y$ while retaining purity. Here we focus on the $\bQ$-Gorenstein part. The key is to use the following klt type criterion.

\begin{lem} \label{lem:klt type criterion}
Let $X$ be a normal variety and $\Delta$ an effective $\bQ$-divisor on $X$. Then $(X,\Delta)$ is of klt $($resp. plt$)$ type if and only if the following two conditions are simultaneously satisfied:
\begin{enumerate}
    \item $R(X,-r(K_X+\Delta)):=\bigoplus_{m\in\bN} \cO_X(-mr(K_X+\Delta))$ is a sheaf of finitely generated $\cO_X$-algebras for some positive integer $r$ such that $\Coef(r\Delta)\in\bZ$.
    \item The pair $(X',\Delta')$, where $X':=\Proj_X R(X,-r(K_X+\Delta))$ and $\Delta'$ is the strict transform of $\Delta$, is klt $($resp. plt$)$.
\end{enumerate}
\end{lem}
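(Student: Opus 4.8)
The plan is to exploit the fact that $X' = \Proj_X R(X, -r(K_X+\Delta))$ is a small birational model of $X$ on which $-(K_{X'}+\Delta')$ is relatively ample over $X$, so that one can transfer boundary divisors back and forth along $g\colon X' \to X$ without changing discrepancies. First I would address the ``only if'' direction. Suppose $(X,\Delta)$ is of klt (resp. plt) type, so locally there is an effective $\bQ$-divisor $D$ with $(X,\Delta+D)$ klt (resp. plt). A standard consequence (via the theory of canonical models / \cite{KM98} or the finite generation results of BCHM, which apply here because $(X,\Delta)$ is klt type) is that $R(X,-r(K_X+\Delta))$ is a finitely generated $\cO_X$-algebra for suitable $r$ with $\Coef(r\Delta)\in\bZ$, giving (1); one should note this finite generation is a local statement and can be checked after passing to the klt pair $(X,\Delta+D)$. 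Then on $X'$, the key point is that $g$ is small (it contracts no divisors, by construction of the relative $\Proj$ of an algebra generated in the degrees attached to $-(K_X+\Delta)$), so $K_{X'}+\Delta' = g^*(K_X+\Delta)$ and $-(K_{X'}+\Delta')$ is $g$-ample. Writing $D' $ for the strict transform of $D$, one checks that $(X',\Delta'+D')$ is klt (resp. plt): discrepancies of divisors over $X$ are unchanged since $g$ is small and crepant for $(X,\Delta+D)$, and no new bad divisors appear. In particular $(X',\Delta')$ is klt (resp. plt), giving (2).

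For the ``if'' direction — which I expect is the direction actually used to deduce Theorem \ref{thm:main} — assume (1) and (2). The morphism $g\colon X'\to X$ is projective and, by construction of $X'$ as a relative $\Proj$, small: the algebra $R(X,-r(K_X+\Delta))$ is generated in the relevant graded pieces precisely so that $g$ extracts no divisor, hence again $K_{X'}+\Delta' = g^*(K_X+\Delta)$ as $\bQ$-divisors. Moreover $-(K_{X'}+\Delta')$ is $g$-ample by the $\Proj$ construction. Now run a $(K_{X'}+\Delta')$-MMP over $X$; equivalently, since $-(K_{X'}+\Delta')$ is $g$-ample and $(X',\Delta')$ is klt (resp. plt), one can find an effective $\bQ$-divisor $G'\sim_{\bQ,X} -(K_{X'}+\Delta')$ such that $(X',\Delta'+G')$ is still klt (resp. plt) — here one perturbs a general enough relatively ample $\bQ$-divisor and uses Bertini-type genericity together with the openness of the klt (resp. plt) condition. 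Pushing forward, $G := g_*G'$ is an effective $\bQ$-divisor on $X$ with $K_X+\Delta+G \sim_{\bQ} g_*(K_{X'}+\Delta'+G') = 0$ over a neighborhood (and in particular $K_X+\Delta+G$ is $\bQ$-Cartier). Since $g$ is small and crepant for $(X,\Delta+G)$ — because $K_{X'}+\Delta'+G' = g^*(K_X+\Delta+G)$ follows from the two linear equivalences and the negativity lemma — the pair $(X,\Delta+G)$ has the same discrepancies as $(X',\Delta'+G')$, hence is klt (resp. plt). This exhibits $(X,\Delta)$ as klt (resp. plt) type.

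The main obstacle, and the step requiring the most care, is establishing that $g\colon X'\to X$ is genuinely small and crepant, i.e. that $\Proj_X$ of the section algebra of $-(K_X+\Delta)$ contracts no divisor and satisfies $K_{X'}+\Delta' = g^*(K_X+\Delta)$. This hinges on the choice of $r$ with $\Coef(r\Delta)\in\bZ$ (so that the strict transform $\Delta'$ and the pullback interact correctly as Weil divisorial sheaves), on the fact that reflexive pullback of $\cO_X(-mr(K_X+\Delta))$ agrees with $\cO_{X'}(-mr(K_{X'}+\Delta'))$ in high enough degree, and on the negativity lemma to rule out discrepancy jumps. Once this crepant-small structure of $g$ is in place, everything else is a transfer of klt/plt along a crepant birational contraction, which is routine; the plt case needs the mild extra remark that the component of $\Delta$ playing the role of the plt center has a well-defined strict transform meeting the argument in the expected way, and that the perturbation $G'$ can be chosen to avoid that component.
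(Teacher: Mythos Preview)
Your proposal is correct and matches the paper's argument essentially line for line: the paper invokes \cite{KM98}*{Lemma 6.2} for precisely the smallness of $g$ and the $\bQ$-Cartier $g$-ampleness of $-(K_{X'}+\Delta')$ that you flag as the main obstacle, uses the BCHM-based Lemma \ref{lem:ample model of Weil div} for finite generation in the ``only if'' direction, and applies Bertini (over the affine base, so $-(K_{X'}+\Delta')$ is actually ample) to produce the complement $D'\sim_\bQ -(K_{X'}+\Delta')$ in the ``if'' direction. One minor notational caution: writing $K_{X'}+\Delta'=g^*(K_X+\Delta)$ is imprecise when $K_X+\Delta$ is not $\bQ$-Cartier --- the paper states this as the strict-transform identity $-(K_{X'}+\Delta')=-\pi^{-1}_*(K_X+\Delta)$ and only forms a genuine pullback once the $\bQ$-Cartier boundary $\Delta+D$ is in play, at which point the negativity lemma is unnecessary since smallness already forces $K_{X'}+\Delta'+D'=\pi^*(K_X+\Delta+D)$.
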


\begin{proof}
We only prove the klt case since the plt case is very similar. Since the statement is local, we may assume that $X$ is affine. By \cite{KM98}*{Lemma 6.2}, as long as $R(X,-r(K_X+\Delta))$ is finitely generated, the morphism $\pi\colon X'\to X$ is an isomorphism in codimension one and $-(K_{X'}+\Delta')=-\pi^{-1}_* (K_X+\Delta)$ is $\bQ$-Cartier and $\pi$-ample. It follows that $-(K_{X'}+\Delta')$ is ample since $X$ is affine.

First suppose that $(X,\Delta)$ is of klt type. Then $R(X,-r(K_X+\Delta))$ is finitely generated by the following Lemma \ref{lem:ample model of Weil div}. If $(X,\Delta+D)$ is klt, then its crepant pullback to $X'$ remains klt. In particular, $(X',\Delta')$ is of klt type. Since $K_{X'}+\Delta'$ is $\bQ$-Cartier, this implies that $(X',\Delta')$ is klt. 

Suppose next that (1) and (2) holds. Since $-(K_{X'}+\Delta')$ is ample and $(X',\Delta')$ is klt, by Bertini's theorem we may choose some $0\le D'\sim_\bQ -(K_{X'}+\Delta')$ such that the pair $(X',\Delta'+D')$ is klt. Note that $K_{X'}+\Delta'+D'\sim_\bQ 0$, hence $K_{X'}+\Delta'+D'=\pi^*(K_X+\Delta+D)$ for some effective $\bQ$-divisor $D$ on $X$ such that $(X,\Delta+D)$ is klt. In other words, $(X,\Delta)$ is of klt type.
\end{proof}

The following result is used in the above proof. For a more general version, see \cite{BM-Cox-iteration}.

\begin{lem} \label{lem:ample model of Weil div}
Let $X$ be a variety of klt type and let $D$ be a Weil divisor on $X$. Then there exists a small birational modification $\pi\colon X'\to X$ such that the strict transform $D'=\pi^{-1}_* D$ is $\bQ$-Cartier and $\pi$-ample. In particular, the sheaf $\bigoplus_{m\in\bN} \cO_X(mD)$ of $\cO_X$-algebras is finitely generated.
\end{lem}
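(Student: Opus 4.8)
The plan is to reduce the finite generation assertion to a relative Minimal Model Program over $X$ via a small $\bQ$-factorialization, and to quote \cite{BCHM} for its termination.

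Since both assertions are local on $X$, I may assume $X$ is affine, hence quasi-projective, and fix an effective $\bQ$-divisor $\Delta_0$ with $(X,\Delta_0)$ klt. It suffices to prove that $R:=\bigoplus_{m\in\bN}\cO_X(mD)$ is a finitely generated sheaf of $\cO_X$-algebras: granting this, $\pi\colon X':=\Proj_X R\to X$ is birational (it is an isomorphism over the locus where $D$ is $\bQ$-Cartier, whose complement has codimension $\ge 2$ since a Weil divisor on a normal variety is Cartier in codimension one), it is small because the graded pieces $\cO_X(mD)$ are reflexive, and $D'=\pi^{-1}_*D$ is $\pi$-ample by the construction of $\Proj$.

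To prove the finite generation, I would pass to a small $\bQ$-factorialization $\pi_0\colon\hat X\to X$, which exists by \cite{BCHM}; then $\hat X$ is $\bQ$-factorial, $(\hat X,\hat\Delta_0)$ is klt with $\hat\Delta_0=(\pi_0^{-1})_*\Delta_0$, and the strict transform $\hat D=(\pi_0^{-1})_*D$ is $\bQ$-Cartier. Because $\pi_0$ is small, $(\pi_0)_*\cO_{\hat X}(m\hat D)=\cO_X(mD)$ for all $m$, so it is enough to show that $R=\bigoplus_{m\in\bN}(\pi_0)_*\cO_{\hat X}(m\hat D)$, i.e. the section algebra of $\hat D$ over $X$, is finitely generated. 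Now $\pi_0$ is birational, so every divisor on $\hat X$ is big over $X$ and, a surjective morphism being unable to lower dimension, there are no Mori fibre spaces over $X$; combined with the klt hypothesis, \cite{BCHM} then shows that $\hat X$ is a Mori dream space over $X$. In particular one can run a $\hat D$-MMP over $X$, terminating at a birational contraction $\phi\colon\hat X\dashrightarrow Y$ over $X$ on which $\phi_*\hat D$ is semiample over $X$, and the section algebra of $\hat D$ over $X$ is finitely generated, as desired.

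The technical heart is running the $\hat D$-MMP over $X$: since $\hat D$ is not of the form $K_{\hat X}+(\text{boundary})$, one cannot contract its negative extremal rays directly, and must instead use that $\overline{NE}(\hat X/X)$ is rational polyhedral — equivalently, that the relative Cox ring of the klt pair $(\hat X,\hat\Delta_0)$ over the affine base $X$ is finitely generated — in order to reduce each step to a $(K_{\hat X}+\text{boundary})$-flip or divisorial contraction handled by \cite{BCHM}. All remaining ingredients — the reduction to the $\bQ$-factorial case, smallness of the ample model, and the passage from a minimal model to an ample model — are formal.
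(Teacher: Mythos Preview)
Your argument is correct and follows essentially the same route as the paper, which simply cites \cite{BCHM} via \cite{Z-3fold-sing-bdd}*{Lemma 4.7}: pass to a small $\bQ$-factorialization, then use \cite{BCHM} to run an MMP for the strict transform of $D$ over $X$ and obtain the ample model. The only cosmetic difference is packaging: you invoke the relative Mori dream space statement to run a $\hat D$-MMP directly, whereas the cited reference perturbs the boundary to $\hat\Delta_0+\epsilon\hat D$ (using $K_{\hat X}+\hat\Delta_0\equiv_X 0$) and takes the log canonical model; these are two phrasings of the same \cite{BCHM} input, and your deduction of smallness and $\pi$-ampleness of $D'$ from the $\Proj$ description is the standard one (cf.\ \cite{KM98}*{Lemma 6.2}).
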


\begin{proof}
This follows easily from \cite{BCHM}, see e.g. \cite{Z-mld^K-1}*{Lemma 4.7}.
\end{proof}

\subsection{Finite generation}

We thus proceed to show that the two conditions in Lemma \ref{lem:klt type criterion} are satisfied by pure images of klt type singularities. In this subsection, we deal with the finite generation part; the other condition will be verified afterwards by reducing to the special case treated in Lemma \ref{lem:special case - Q-Gor base}. We need an auxiliary result.

\begin{lem} \label{lem:Spec H^0(open in klt) still klt}
Let $Y$ be an affine variety of klt type, let $V\subseteq Y$ be an open subset, and let $L$ be a Weil divisor on $V$. Then:
\begin{enumerate}
    \item $Y_0:=\Spec(H^0(\cO_V))$ is of klt type.
    \item The algebra $R(V,L)=\bigoplus_{m\in\bN} H^0(V,\cO_V(mL))$ is finitely generated over $H^0(\cO_V)$.
    \item The birational map $V\dashrightarrow Y_0$ identifies $V$ with a big open set of $Y_0$ $($i.e. $Y_0\setminus V$ has codimension at least two$)$.
    \item If $\Delta$ is an effective $\bQ$-divisor on $Y$ such that $(Y,\Delta)$ is of klt $($resp. plt$)$ type, then so is the strict transform $(Y_0,\Delta_0)$.
\end{enumerate}
\end{lem}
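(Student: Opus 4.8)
The plan is to identify $Y_0$ with an explicit affine open subset of a small birational modification of $Y$; once this is done, all four assertions follow from standard facts about klt/plt type pairs. First I would reduce to the case $V=Y\setminus\Supp(Z)$, where $Z$ is the reduced divisor whose components are the codimension-one irreducible components of $Y\setminus V$: since $Y$ is normal, sections of $\cO_Y$ and of every reflexive sheaf $\cO_Y(mL)$ extend uniquely across closed subsets of codimension $\ge 2$, so replacing $V$ by $V_1:=Y\setminus\Supp(Z)$ (and $L$ by its closure on $V_1$) changes neither $Y_0$ nor the truth of (1)--(4). If $Z=0$ then $Y_0=Y$ and everything is trivial, so assume $Z\ne 0$.

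Next, by Lemma \ref{lem:ample model of Weil div} the sheaf $\bigoplus_m\cO_Y(mZ)$ of $\cO_Y$-algebras is finitely generated; let $\pi\colon\widetilde{Y}:=\Proj_Y\bigoplus_m\cO_Y(mZ)\to Y$ be the associated small birational morphism, so that $\widetilde{Y}$ is normal, $Z_{\widetilde{Y}}:=\pi^{-1}_*Z$ is $\bQ$-Cartier and $\pi$-ample, and $\widetilde{Y}$ is of klt type (a small, hence crepant, birational modification of a klt type variety is again of klt type, as discrepancies are unchanged). Over $V$ the graded algebra $\bigoplus_m\cO_Y(mZ)$ restricts to the polynomial algebra $\bigoplus_m\cO_V$, whose relative $\Proj$ is $V$, so $\pi$ is an isomorphism over $V$. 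Set $\widetilde{V}_0:=\widetilde{Y}\setminus\Supp(Z_{\widetilde{Y}})$. Since $Z_{\widetilde{Y}}$ is effective and $\pi$-ample and $Y$ is affine, $\widetilde{V}_0$ is affine; and since $\pi$ is small, $\Supp(Z_{\widetilde{Y}})$ contains every codimension-one component of $\pi^{-1}(\Supp Z)$, so $\pi^{-1}(V)\subseteq\widetilde{V}_0$ with complement of codimension $\ge 2$. Using normality together with $\pi^{-1}(V)\cong V$ we then obtain
\[
Y_0=\Spec H^0(\cO_V)=\Spec H^0\big(\cO_{\pi^{-1}(V)}\big)=\Spec H^0(\cO_{\widetilde{V}_0})=\widetilde{V}_0 ,
\]
so $Y_0$ is identified with the affine open subvariety $\widetilde{Y}\setminus\Supp(Z_{\widetilde{Y}})$ of $\widetilde{Y}$, and inside it $V\cong\pi^{-1}(V)$ is a big open set.

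This last statement is (3), and (1) follows at once since $Y_0$ is open in the klt type variety $\widetilde{Y}$. For (2): as $V$ is a big open subset of the normal variety $Y_0$, the divisor $L$ extends uniquely to a Weil divisor $L_0$ on $Y_0$ with $H^0(V,\cO_V(mL))=H^0(Y_0,\cO_{Y_0}(mL_0))$ for all $m$, and Lemma \ref{lem:ample model of Weil div} applied to $Y_0$ (of klt type by (1)) and $L_0$ shows $\bigoplus_m\cO_{Y_0}(mL_0)$ is finitely generated, hence so is $R(V,L)$ since $Y_0$ is affine. For (4): if $(Y,\Delta)$ is of klt (resp.\ plt) type, then $(\widetilde{Y},\pi^{-1}_*\Delta)$ is of klt (resp.\ plt) type because $\pi$ is crepant, and restriction to the open subset $\widetilde{V}_0=Y_0$ preserves this, giving $(Y_0,\Delta_0)$ of klt (resp.\ plt) type with $\Delta_0=(\pi^{-1}_*\Delta)|_{Y_0}$ the strict transform of $\Delta$.

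The step I expect to be the main obstacle is the construction of $\widetilde{Y}$ in the second paragraph: one must pass to a small modification on which the a priori non-$\bQ$-Cartier boundary $Z$ becomes $\bQ$-Cartier and relatively ample --- this is precisely where finite generation, via Lemma \ref{lem:ample model of Weil div} and ultimately \cite{BCHM}, enters --- and then verify both that $\pi$ is an isomorphism over $V$ and that $\Supp(Z_{\widetilde{Y}})$ captures exactly the divisorial part of $\pi^{-1}(Y\setminus V)$, so that deleting it yields the affinization $Y_0$ with $V$ sitting inside as a big open set. Granting this geometric picture, assertions (1), (2) and (4) follow formally from restriction of klt/plt type pairs to open subsets, from crepancy of small birational maps, and from finite generation of divisorial section rings.
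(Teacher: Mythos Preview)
Your proposal is correct and follows essentially the same approach as the paper: both arguments apply Lemma \ref{lem:ample model of Weil div} to the divisorial part of $Y\setminus V$ to obtain a small modification $\widetilde{Y}\to Y$ on which that divisor becomes $\bQ$-Cartier and relatively ample, identify $Y_0$ with the affine open $\widetilde{Y}\setminus Z_{\widetilde{Y}}$, and then read off (1)--(4). Your preliminary reduction to $V=Y\setminus\Supp(Z)$ and the explicit $\Proj$ description are minor expository additions, not a different strategy.
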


\begin{proof}
We shall give a geometric construction of $Y_0$. Let $D$ be the divisorial part of $Z:=\Supp(Y\setminus V)$ (if $Z$ has codimension $\ge 2$ in $Y$ then $D=0$). By Lemma \ref{lem:ample model of Weil div}, there exists a small birational morphism $\pi\colon Y'\to Y$ such that $D'=\pi_*^{-1}D$ is $\bQ$-Cartier and $\pi$-ample. By construction, we have:
\begin{itemize}
    \item $Y'\setminus D'$ is affine ($Y'$ is projective over the affine variety $Y$, and we remove from it an ample divisor $D'$).
    \item $Y'$ is of klt type (take the crepant pullback of a klt boundary on $Y$).
    \item $\pi$ is an isomorphism over $V$ (the exceptional set is contained in $D$).
\end{itemize}
By the last property, we see that $V$ is a big open subset of $Y'\setminus D'$, hence $H^0(\cO_{Y'\setminus D'})=H^0(\cO_V)$. Combined with the first property we deduce that $Y_0\cong Y'\setminus D'$, hence (3) holds and the second property above implies (1). Similarly (4) holds. The closure of $L$ gives a Weil divisor $L_0$ on $Y_0$ such that $R(V,L)=R(Y_0,L_0)$. By Lemma \ref{lem:ample model of Weil div}, the algebra $R(Y_0,L_0)$ is finitely generated. This finishes the proof. 
\end{proof}

The following lemma gives the desired finite generation.

\begin{lem} \label{lem:R(-K_X) fg}
Let $f\colon Y\to X$ be a pure morphism between normal Noetherian affine schemes. Assume that for any open subset $V\subseteq Y$ and any Weil divisor $L$ on $V$, the section ring $R(V,L)$ is finitely generated $($as an $H^0(\cO_Y)$-algebra$)$. Then $R(X,D)$ is finitely generated $($as an $H^0(\cO_X)$-algebra$)$ for any Weil divisor $D$ on $X$.
\end{lem}

By Lemma \ref{lem:Spec H^0(open in klt) still klt}, the above assumption on $Y$ holds when $Y$ is a variety of klt type over $\bk$. Lemma \ref{lem:R(-K_X) fg} also applies when $Y$ is $\bQ$-factorial: in this case the divisorial part of $Y\setminus V$ supports some Cartier divisor $D$, and replacing $V$ with the open set $Y\setminus D$ does not change the section ring; but as $L$ is $\bQ$-Cartier and $Y\setminus D$ is affine, the section ring $R(Y\setminus D,L)$ is finitely generated.

\begin{proof}
Let $U$ be the regular locus of $X$. By Lemma \ref{lem:purity preserved}(3), $R(X,D)=R(U,D_U)$ is a pure subalgebra of $R(f^{-1}(U),f^*D_U)$, and the latter is finitely generated by our assumption. If $f$ is of finite type, it then follows from \cite{Has-pure-subalg-fg} that $R(X,D)$ is finitely generated. For the general case, denote $R:=R(X,D)$ and $S:=R(f^{-1}(U),f^*D_U)$. Note that $R$ is a \emph{graded} pure subalgebra of $S$ and its degree $0$ part is exactly $H^0(\cO_X)$. Let $I=\bigoplus_{m\ge 1} R_m:=\bigoplus_{m\ge 1} H^0(X,mD)$. Since $S$ is Noetherian, the ideal $IS$ is also finitely generated, hence we may write $IS=(g_1,\dots,g_m)S$ for some homogeneous element $g_1,\dots,g_m\in I$. By purity, the map 
\[
R/(g_1,\cdots,g_m)R \to S/(g_1,\cdots,g_m)S = S/IS
\]
is injective. On the other hand its kernel contains $I/(g_1,\cdots,g_m)R$ by construction, thus $I=(g_1,\cdots,g_m)R$. In other words, any homogeneous element $g\in R$ of positive degree can be written as $g=g_1 h_1+\dots+g_m h_m$ for some $h_1,\dots,h_m\in R$. By induction on the degree, this implies that $R=R_0[g_1,\dots,g_m]$, hence $R$ is finitely generated.
\end{proof}


\subsection{Completion of the proof}

Returning to the setup of Theorem  \ref{thm:main}, we now let $X':=\Proj_X(R(X,-K_X))$. Recall that $X'\to X$ is a small modification, and $-K_{X'}$ is $\bQ$-Cartier and ample. It remains to show:

\begin{lem} \label{lem:Q-Gor model klt}
$X'$ has klt singularities.
\end{lem}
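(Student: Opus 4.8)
The plan is to reduce the statement to the special case Lemma~\ref{lem:special case - Q-Gor base}, applied not globally but over each standard affine chart of $X'=\Proj_X R(X,-K_X)$.

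First I would pass from $Y$ to a better model. Let $U\subseteq X$ be the smooth locus, $V:=f^{-1}(U)$, and $Y_0:=\Spec H^0(\cO_V)$. By Lemma~\ref{lem:Spec H^0(open in klt) still klt}, $Y_0$ is of klt type and $V$ is a big open subset of $Y_0$; by Lemma~\ref{lem:purity preserved}(2) the induced morphism $f_0\colon Y_0\to X$ is pure; and $f_0^{-1}(X_{\sing})=Y_0\setminus V$ has codimension $\ge 2$ in $Y_0$. Let $L_0$ be the closure in $Y_0$ of $f_0^*K_U$, a Weil divisor. Then Lemma~\ref{lem:purity preserved}(3) shows that $R(X,-K_X)=R(U,-K_U)$ is a pure graded subring of $R(Y_0,-L_0)=R(V,-f_0^*K_U)$, and both rings are finitely generated (the first by Lemma~\ref{lem:R(-K_X) fg}, the second by Lemma~\ref{lem:ample model of Weil div}, since $Y_0$ is of klt type). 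Set $Y_0':=\Proj_{Y_0}R(Y_0,-L_0)$. As in the proof of Lemma~\ref{lem:klt type criterion}, $Y_0'\to Y_0$ is a small birational morphism, so $Y_0'$ is again of klt type (take the crepant pullback of a klt boundary on $Y_0$), and $V$ remains a big open subset of $Y_0'$.

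The key observation is that, although there is in general \emph{no} morphism $Y_0'\to X'$ — the pulled-back anticanonical systems $|{-mK_X}|$ may acquire a base locus on $Y_0'$, necessarily contained in the codimension-$\ge 2$ set $Y_0'\setminus V$ — we do not need one. The pure graded inclusion $R(X,-K_X)\hookrightarrow R(Y_0,-L_0)$ localizes, for every homogeneous $s$ of positive degree, to a pure inclusion of degree-zero parts $R(X,-K_X)[s^{-1}]_0\hookrightarrow R(Y_0,-L_0)[s^{-1}]_0$ (purity survives inverting $s$ and passing to the degree-zero part of a graded extension), i.e. a pure morphism $D_+(s)_{Y_0'}\to D_+(s)_{X'}$ between affine opens. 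Here $D_+(s)_{Y_0'}$ is an open subvariety of the klt type variety $Y_0'$, hence of klt type; $D_+(s)_{X'}$ is $\bQ$-Gorenstein since $K_{X'}$ is $\bQ$-Cartier; and, $X'\to X$ being small, $U$ is identified with a big open subset $U'$ of the smooth locus of $X'$, over which the chart morphism is induced by $f_0$. Consequently the preimage of $(D_+(s)_{X'})_{\sing}$ is disjoint from $V$, hence contained in $D_+(s)_{Y_0'}\setminus V$, which has codimension $\ge 2$. Thus Lemma~\ref{lem:special case - Q-Gor base} applies to each chart and gives that $D_+(s)_{X'}$ has klt singularities; as finitely many such charts cover $X'$, the lemma follows.

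I expect the delicate step to be the codimension-$\ge 2$ verification on each chart, which rests on $V$ being big in $Y_0'$ and on the smallness of $X'\to X$ (so that $X'$ is smooth over $U$ and $X'_{\sing}$ avoids $U'$); the remaining ingredients — stability of purity under localization and taking degree-zero parts, finite generation of the relevant section rings, and klt-ness of small birational modifications — are routine given the earlier lemmas.
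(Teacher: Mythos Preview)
Your proof is correct and shares the paper's overall strategy: cover $X'$ by affine charts and apply Lemma~\ref{lem:special case - Q-Gor base} to each, after arranging the codimension-$\ge 2$ condition on the preimage of the singular locus. The implementations differ, however. The paper works chart by chart from the start: for each $U_1 = X'\setminus D'$ (with $0\le D'\sim -mK_{X'}$), it sets $U = X\setminus(X_{\sing}\cup D)$, $V = f^{-1}(U)\subseteq Y$, and $V_1 = \Spec H^0(\cO_V)$, then applies Lemma~\ref{lem:Spec H^0(open in klt) still klt} directly to this chart-dependent $V$ to get a pure morphism $V_1\to U_1$ satisfying the hypotheses of Lemma~\ref{lem:special case - Q-Gor base}; no Proj on the $Y$ side is ever formed. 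Your route instead builds a single global $Y_0$ from $V=f^{-1}(X_{\mathrm{sm}})$, then the auxiliary small model $Y_0'=\Proj_{Y_0}R(Y_0,-L_0)$, and extracts chart maps by localizing the graded pure inclusion $R(X,-K_X)\hookrightarrow R(Y_0,-L_0)$. This is valid but requires two extra verifications you correctly flag: that purity survives inverting a homogeneous element and passing to degree zero, and that the codimension-$\ge 2$ locus $Y_0\setminus V$ stays codimension $\ge 2$ after pulling back along the small map $Y_0'\to Y_0$ (here one should really speak of $\pi^{-1}(V)\subseteq Y_0'$ rather than ``$V\subseteq Y_0'$'', but the content is the same). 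The upshot is that the paper's version is shorter and sidesteps the second Proj entirely, while yours makes the parallel between $X'$ and a model on the $Y$ side more explicit.
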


\begin{proof}
Let $m$ be a sufficiently divisible positive integer. Let $0\le D'\sim -mK_{X'}$ and let $U_1=X'\setminus D'$. Then $U_1$ is an affine open subset of $X'$ and as we vary $D'$, the corresponding $U_1$ cover $X'$. Thus it suffices to show that $U_1$ has klt singularities. Let $D$ be the strict transform of $D'$ on $X$, let $U=X\setminus (X_{\sing}\cup D)$, and let $V=f^{-1}(U)\subseteq Y$. Then $U$ is a big open subset of $U_1$, hence $H^0(\cO_{U_1})=H^0(\cO_U)$. By Lemma \ref{lem:purity preserved}(2), this is a pure subring of $H^0(\cO_V)$. Consider the induced morphism $V_1:=\Spec(H^0(\cO_V))\to U_1$. By Lemma \ref{lem:Spec H^0(open in klt) still klt}, $V_1$ is of klt type and $V_1\setminus V$ has codimension at least two. By construction, $K_{U_1}$ is $\bQ$-Cartier and the singular locus of $U_1$ is contained in $U_1\setminus U$. Thus the assumptions of Lemma \ref{lem:special case - Q-Gor base} are satisfied and we deduce that $U_1$ is klt as desired.
\end{proof}

\begin{proof}[Proof of Theorem \ref{thm:main}]
By \cite{HR-quotient-CM}*{Proposition 6.15}, $X$ is normal. The result then follows from Lemma \ref{lem:klt type criterion}, Lemma \ref{lem:R(-K_X) fg} and Lemma \ref{lem:Q-Gor model klt}.
\end{proof}

\begin{cor} \label{cor:equidim image}
Let $f\colon Y\to X$ be an equidimensional surjective morphism between affine varieties. Assume that $Y$ is of klt type. Then $X$ is also of klt type.
\end{cor}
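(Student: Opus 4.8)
The plan is to deduce Corollary~\ref{cor:equidim image} from Theorem~\ref{thm:main} by proving that the equidimensional morphism $f$ is pure: once this is known, then since $Y$ is of klt type, Theorem~\ref{thm:main} gives that $X$ is of klt type. First I would make a few harmless reductions. As $Y$ is of klt type it is normal, and after passing to connected components we may assume $Y$ is irreducible; since $f$ is equidimensional it is surjective, so $X=f(Y)$ is irreducible as well. We also assume $X$ is normal (as in the introduction, equidimensional morphisms are understood to go between normal varieties, and in any case $X$ must be normal for the conclusion to hold).

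To prove that $f$ is pure I would reduce to the case of a finite morphism. Since $f$ is affine and of finite type, fix a closed embedding $Y\hookrightarrow X\times\bA^m$ over $X$ and put $d=\dim Y-\dim X$. Because every fibre of $f$ has pure dimension $d$, a relative form of Noether normalization yields a linear projection $\bA^m\to\bA^d$ whose composite with the embedding is a \emph{finite} surjective morphism $q\colon Y\to X\times\bA^d$ over $X$. Then $Z:=q^{-1}(X\times\{0\})_{\mathrm{red}}$ is a reduced closed subscheme of $Y$ which is finite and surjective over $X$; discarding the irreducible components of $Z$ that fail to dominate $X$ leaves a nonempty (since $X$ is irreducible) reduced closed subscheme $Z'\subseteq Y$, still finite and surjective over $X$, all of whose components dominate $X$. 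In characteristic zero and with $X$ normal, the normalized trace $\tfrac1{\deg}\,\mathrm{Tr}\colon\cO_{Z'}\to\cO_X$ is a well-defined $\cO_X$-linear retraction of the inclusion $\cO_X\hookrightarrow\cO_{Z'}$ — the trace of an element integral over $\cO_X$ is again integral and lies in the fraction field of $\cO_X$, hence in $\cO_X$ by normality, while the trace of an element of $\cO_X$ is $\deg$ times that element — so $\cO_X\to\cO_{Z'}$ is a split injection, in particular pure. This pure map factors as $\cO_X\to\cO_Y\to\cO_{Z'}$, the second map being restriction to the closed subscheme $Z'\subseteq Y$; since a ring homomorphism through which a pure map factors is itself pure, $\cO_X\to\cO_Y$ is pure, i.e.\ $f$ is pure, and Theorem~\ref{thm:main} applies.

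The main obstacle is the relative Noether normalization step, i.e.\ choosing the projection $\bA^m\to\bA^d$ so that $Y$ becomes \emph{finite}, not merely quasi-finite, over $X\times\bA^d$. Equidimensionality of $f$ is exactly what is used here: a generic projection does not work in general (for example, for $Y=\{z=xy\}\subseteq\bA^1_x\times\bA^2_{y,z}$ over $\bA^1_x$ one must project away from the $z$-direction at infinity), so the direction of projection must be chosen in accordance with the behaviour of the fibres of $f$ at infinity, uniformly over $X$, which is possible precisely because all fibres have the same dimension $d$. One could instead try to prove purity by observing that over the smooth locus $U\subseteq X$ the morphism $f$ is faithfully flat by miracle flatness — using that $Y$, being of klt type, has rational hence Cohen--Macaulay singularities, that $U$ is regular, and that $f$ is equidimensional — hence pure over $U$, and then propagate purity across $X_{\sing}$ using that $X,Y$ are normal and that $f^{-1}(X_{\sing})$ has codimension $\ge2$ in $Y$ (again by equidimensionality); but this propagation is itself delicate, so the finite-reduction route seems preferable.
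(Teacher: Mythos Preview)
Your overall strategy---prove that $f$ is pure and then invoke Theorem~\ref{thm:main}---is exactly the paper's primary approach; the paper simply cites \cite{CGM-pure-subring}*{Lemma 2.7} for purity and also records a second route via Bertini hyperplane sections and \cite{FG-cbf}*{Lemma 1.1}. Your normality assumption on $X$ matches the paper's convention in the introduction.

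The gap is the relative Noether normalization step. A \emph{global} finite $X$-morphism $q\colon Y\to X\times\bA^d$ coming from a single linear projection need not exist, and more seriously your endgame (a closed subscheme $Z'\subseteq Y$ finite and surjective over $X$) can fail outright. Already when $d=0$ the map $Y\to X$ is quasi-finite and surjective, ``Noether normalization'' is vacuous, and $q=f$ regardless of the embedding; yet $f$ need not be finite. Concretely, let $X=\bA^1_x$ and $Y=V(xy^2-y+x)\subseteq X\times\bA^1_y$. Then $Y$ is a smooth irreducible affine curve, $Y\to X$ is quasi-finite and surjective (every fiber is nonempty and finite), but the minimal monic relation for $y$ over $k(x)$ is $y^2 - x^{-1}y + 1=0$, so $y$ is not integral over $k[x]$ and $Y\to X$ is not finite. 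Since $Y$ is an irreducible curve, its only proper closed subschemes are finite sets of points, none of which surject onto $X$; hence no $Z'$ as in your argument exists. (The map is nevertheless pure: $\cO_Y$ is torsion-free over the PID $k[x]$ and $f$ is surjective, so $f$ is faithfully flat.) You correctly flag this step as the main obstacle, but the assertion that equidimensionality alone allows a uniform choice of projection is not a proof, and this example shows it is false.

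The fix---carried out in the appendix when proving Corollary~\ref{cor:equidimensional excellent}---is to work locally on $X$ (purity is local on the base) and use \cite{EGA4_3}*{Proposition 13.3.1} to factor $f$ as $Y\xrightarrow{g}\bA^d_X\to X$ with $g$ merely \emph{quasi-finite}. Then $\bA^d_X\to X$ is faithfully flat, and $g$ is pure at each point of its image because $\bA^d_X$ is normal and $\mathrm{char}(\bk)=0$: Zariski's Main Theorem reduces the local statement to the finite case, where your normalized-trace splitting applies.
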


Recall that a surjective morphism $f\colon Y\to X$ between algebraic varieties is equidimensional if every fiber has the same dimension (which necessarily equals $\dim Y-\dim X$).

\begin{proof}
By \cite{CGM-pure-subring}*{Lemma 2.7}, $f$ is pure. The result then follows from Theorem \ref{thm:main}. Alternatively, by Bertini's theorem and taking hyperplane sections one reduces to the case when $f$ is quasi-finite, then the result follows from \cite{FG-cbf}*{Lemma 1.1}.
\end{proof}

\begin{proof}[Proof of Corollary \ref{cor:reductive quotient}]
Since $G$ is reductive and $\mathrm{char}(\bk)=0$, $A^G$ is a direct summand of $A$, thus the result follows from Theorem \ref{thm:main}.
\end{proof}

\subsection{The plt case}

With a slight modification of the previous argument, we can also prove an analogous statement in the plt case \footnote{Thanks to Javier Carvajal-Rojas for suggesting this refinement.}.

\begin{thm} \label{thm:plt}
Let $f\colon Y\to X$ be a pure morphism between affine varieties, let $P$ be a prime divisor on $X$, and let $Q=f^* P$ be the cycle-theoretic pullback. Assume that $(Y,Q)$ is of plt type. Then $(X,P)$ is of plt type.
\end{thm}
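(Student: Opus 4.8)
The plan is to run the same argument as in the proof of Theorem~\ref{thm:main}, carrying the divisor $P$ (resp. $Q$) along at every stage, and upgrading ``klt type'' to ``plt type'' throughout. The backbone is again Lemma~\ref{lem:klt type criterion}, now in its plt incarnation: it suffices to show that $R(X,-r(K_X+P))$ is finitely generated for suitable $r$ (here one may simply take $r=1$ since $P$ is an integral divisor, so $\Coef(rP)=1\in\bZ$ automatically) and that the pair $(X',P')$ on the small modification $X':=\Proj_X R(X,-(K_X+P))$ is plt.

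\emph{Finite generation.} The argument of Lemma~\ref{lem:R(-K_X) fg} goes through essentially verbatim. On the smooth locus $U$ of $X$ we have $R(X,-(K_X+P)) = R(U, -(K_U+P_U))$, which by Lemma~\ref{lem:purity preserved}(3) is a graded pure subalgebra of $R(f^{-1}(U), -f^*(K_U+P_U))$. Since $(Y,Q)$ is of plt type it is in particular of klt type, so by Lemma~\ref{lem:Spec H^0(open in klt) still klt}(2) the latter algebra is finitely generated; the graded purity bootstrap at the end of Lemma~\ref{lem:R(-K_X) fg} then gives finite generation of $R(X,-(K_X+P))$. One small point to check: $f^*(K_U+P_U)$ should be interpreted so that $R(f^{-1}(U),\cdot)$ is the section ring of $-K_{f^{-1}(U)} - Q|_{f^{-1}(U)}$, i.e. one needs $f^*P_U = Q|_{f^{-1}(U)}$ as Weil divisors on $f^{-1}(U)$; this is exactly the definition of the cycle-theoretic pullback away from the ramification, and over the smooth locus of $X$ there is no subtlety with $K$.

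\emph{The plt model is plt.} This is where the plt case of Lemma~\ref{lem:special case - Q-Gor base} is needed, and I expect it to be the main obstacle — one must reprove that lemma with a boundary. Set $X' := \Proj_X R(X,-(K_X+P))$, so $X'\to X$ is small and $-(K_{X'}+P')$ is $\bQ$-Cartier and ample, where $P'$ is the strict transform of $P$. As in Lemma~\ref{lem:Q-Gor model klt}, pick $0\le D'\sim -m(K_{X'}+P')$ for $m$ sufficiently divisible, let $U_1 = X'\setminus D'$ (these cover $X'$ as $D'$ varies), let $U = X'\setminus(X'_{\sing}\cup \Supp D' \cup \Supp P')$ pulled back to $X$... — more precisely, let $U$ be the big open subset of $U_1$ obtained by removing $X'_{\sing}$, $\Supp D'$, and the support of $P'$, so that $U$ lies in the smooth locus of $X$ and is disjoint from $P$. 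Let $V=f^{-1}(U)$. Then $H^0(\cO_{U_1}) = H^0(\cO_U)$ is a pure subring of $H^0(\cO_V)$ by Lemma~\ref{lem:purity preserved}(2), and $V_1 := \Spec H^0(\cO_V)$ is of klt type with $V_1\setminus V$ of codimension $\ge 2$. Since over $U$ the divisor $P$ is absent, one sees that $(U_1, P'|_{U_1})$ is plt if and only if $U_1$ is plt along $\Supp P'$ and klt away from it; the klt-away-from-$P'$ part follows from the klt case already proved (Lemma~\ref{lem:special case - Q-Gor base} applied on the open set where $P'$ is removed), while near $\Supp P'$ one argues by adjunction. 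Concretely: $P'$ is $\bQ$-Cartier on $X'$ (since both $K_{X'}$ and $K_{X'}+P'$ are), so one can run the index-one-cover argument of Lemma~\ref{lem:special case - Q-Gor base} relative to $K_{X'}+P'$ rather than $K_{X'}$, pull $P'$ back to the cover $X''$ where it becomes the reduced preimage $P''$ with $K_{X''}+P'' = \pi^*(K_{X'}+P')$ Cartier, check that the corresponding cover $Y''_1$ of a klt-type component carries a divisor $Q''$ making $(Y''_1,Q'')$ of plt type and hence (since $Y''_1$ has a $\bQ$-Cartier canonical-plus-boundary) plt, then invoke Boutot together with the plt version of the Gorenstein criterion \cite{KM98}*{Corollary 5.24}: for a Gorenstein pair $(X'',P'')$ with $P''$ reduced and $X''\setminus P''$ having rational singularities and $(X'',P'')$ lc, one gets plt. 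The one genuinely new input is thus a plt analogue of ``Gorenstein rational $\Leftrightarrow$ klt'', namely that a Cohen-Macaulay log-canonical pair $(X'', P'')$ with $P''$ reduced Cartier, normal, and with $X''\setminus \Supp P''$ having rational singularities, is plt — this is standard (it follows from inversion of adjunction plus the Gorenstein criterion applied on $X''\setminus P''$ and along $P''$), but it should be cited or spelled out carefully. Finally, descending via the quasi-\'etale maps $X''\to X'$ and pushing down through the small modification $X'\to X$ via Lemma~\ref{lem:klt type criterion} in the plt case gives that $(X,P)$ is of plt type.
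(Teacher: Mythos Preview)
Your finite-generation step is fine and matches the paper. The gap is in the ``plt model is plt'' step, specifically in the criterion you invoke at the end. You assert that a Cohen--Macaulay lc pair $(X'',P'')$ with $P''$ reduced Cartier and normal, and with $X''\setminus P''$ having rational singularities, is plt. This is false: take $X''=\bA^3$ and $P''$ a cone over an elliptic curve. Then $X''$ is smooth, $P''$ is normal Cartier, $(X'',P'')$ is lc, but $P''$ is strictly lc, so by inversion of adjunction $(X'',P'')$ is not plt. The point is that Boutot applied on $X''$ gives you rational singularities of the ambient space, but says nothing about the singularities \emph{of} $P''$, which is what plt requires. (There is also a smaller slip: your index-one cover is for $K_{X'}+P'$, so it makes $K_{X''}+P''$ Cartier, not $P''$ itself.)

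The paper's route around this is genuinely different from what you attempt. After reducing to $K_X+P$ $\bQ$-Cartier (as you do), it performs a \emph{second} small modification to make $P$ itself $\bQ$-Cartier, and then takes the index-one cover for $P$ (not for $K_X+P$). Now $P_1$ is Cartier and $K_{X_1}+P_1$ is still $\bQ$-Cartier. The payoff of $P_1$ being Cartier is that Lemma~\ref{lem:purity preserved}(1) applies to the ideal of $P_1$, so the induced map $Q_1\to P_1$ is pure. Since $(Y_1,Q_1)$ is plt, adjunction gives $Q_1$ of klt type; Theorem~\ref{thm:main} applied to $Q_1\to P_1$ gives $P_1$ of klt type; inversion of adjunction then gives $(X_1,P_1)$ plt. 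In short, the missing idea is to arrange $P$ Cartier so that purity descends to the divisor itself and one can invoke the already-proved klt theorem \emph{on} $P$, rather than trying to read off plt from a rationality statement on the total space.
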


Here we define the cycle-theoretic pullback $f^*P$ of a prime divisor $P$ as the Weil divisor associated to the divisorial part of the scheme-theoretic pullback $f^{-1}(P)$. For general $\bQ$-divisors $D=\sum a_i D_i$, the cycle-theoretic pullback is defined by linearity, i.e. $f^*D:=\sum a_i f^*D_i$.

\begin{proof}
By replacing $Y$ with $\Spec(H^0(\cO_V))$ where $V=Y\setminus f^{-1}(X_{\sing})$ and applying Lemma \ref{lem:Spec H^0(open in klt) still klt}, we may assume throughout the proof that $f^{-1}(X_{\sing})$ has codimension at least two in $Y$. In this case, the cycle-theoretic pullback $f^*P$ is determined by its restriction to $V$, where it becomes the pullback of a Cartier divisor, and hence commutes with finite base change over $X$. By \cite{KM98}*{Proposition 2.43}, we know that $Y$ is of klt type, thus so is $X$ by Theorem \ref{thm:main}. In particular, $X$ is normal. By Lemma \ref{lem:ample model of Weil div}, there exists a small birational morphism $X'\to X$ such that $-(K_{X'}+P')$ is $\bQ$-Cartier and ample over $X$, where $P'$ is the strict transform of $P$. By Lemma \ref{lem:klt type criterion}, it suffices to show that $(X',P')$ is plt. As in the proof of Lemma \ref{lem:Q-Gor model klt}, using Lemma \ref{lem:purity preserved} and Lemma \ref{lem:Spec H^0(open in klt) still klt} we can cover $X'$ by affine open sets $U_i$ such that each $U_i$ is the pure image of some affine variety $V_i$ where $V_i\to U_i$ satisfies the same assumptions of the theorem. Thus replacing $X$ by the various $U_i$'s we may assume that $K_X+P$ is $\bQ$-Cartier. 

By Lemma \ref{lem:ample model of Weil div} again, there also exists a small birational morphism $X''\to X$ such that the strict transform $P''$ of $P$ is $\bQ$-Cartier and ample over $X$. Note that $(X,P)$ is plt if and only if $(X'',P'')$ is plt, since the morphism is small and hence crepant. As before, $X''$ admits affine open covers of the form $X''\setminus D''$ where $0\le D''\sim mP''$ for some integer $m$, and each of these affine subsets can be realized as the image of some pure morphism that satisfies the assumptions of the theorem (again this follows from Lemma \ref{lem:purity preserved} and Lemma \ref{lem:Spec H^0(open in klt) still klt}, as in the proof of Lemma \ref{lem:Q-Gor model klt}). Thus replacing $X$ by these affine subsets of $X''$, we can further assume that $P$ is also $\bQ$-Cartier.

Consider the index one cover $X_1\to X$ for $P$ and let $P_1$ be the preimage of $P$. Recall from the beginning of the proof that $f^{-1}(X_{\sing})$ has codimension at least two in $Y$. Thus as in the proof of Lemma \ref{lem:special case - Q-Gor base}, (possibly after shrinking $X$) there exists some component $Y_1$ of the normalization of $X_1\times_X Y$ such that the induced morphism $f_1\colon Y_1\to X_1$ is pure and $(Y_1,Q_1)$ is of plt type. Here $Q_1$ is the preimage of $Q$; it coincides with the cycle-theoretic pullback $f_1^* P_1$ as $f^{-1}(X_{\sing})$ has codimension at least two in $Y$. Now $P_1$ is Cartier by construction, and $(X,P)$ is plt if and only if $(X_1,P_1)$ is plt. Replacing $(X,P)$ with $(X_1,P_1)$, we reduce to the case when $K_X+P$ is $\bQ$-Cartier and $P$ is Cartier.

Since $P$ is Cartier, the induced morphism $Q\to P$ is pure by Lemma \ref{lem:purity preserved}(1), and $Q$ is of klt type by adjunction. Thus $P$ is klt by Theorem \ref{thm:main}. In particular, it is normal. As we also have that $K_X+P$ is $\bQ$-Cartier, inversion of adjunction \cite{KM98}*{Theorem 5.50} implies that $(X,P)$ is plt. This finishes the proof. 
\end{proof}

\subsection{Some remarks and further questions} 
A natural question is whether Theorem \ref{thm:main} generalizes to the log canonical setting.

\begin{que} \label{q:lc case}
Let $f\colon Y\to X$ be a pure morphism between affine varieties. Assume that $Y$ is of lc type. Is $X$ also of lc type?
\end{que}

This question has a positive answer if $K_X$ is Cartier \cite{GM-DuBois-pure-subring}*{Corollary B}. One of the difficulties in extending our arguments to the general lc case is that Lemma \ref{lem:klt type criterion} fails if we replace klt (or plt) by lc, as illustrated by the following example.

\begin{expl}
We show that there exist lc type singularities $X$ for which $R(X,-K_X)$ is not finitely generated. Let $C_0\subseteq \bP^2$ be a smooth elliptic curve, and let $P_1,\dots,P_9$ be nine very general points on $C_0$ (we may assume $\bk=\bC$). Let $S$ be the blowup of $\bP^2$ at $P_1,\dots,P_9$, let $C\subseteq S$ be the strict transform of $C_0$, and let $X$ be the cone over $S$ (with respect to some sufficiently ample divisor $H$). Then $K_S+C\sim 0$ and $(S,C)$ is log smooth, which implies the cone over $(S,C)$ is lc \cite{Kol13}*{Lemma 3.1}, thus $X$ is of lc type. We have 
\[
R(X,-K_X)\cong \bigoplus_{\ell,m\in \bN} H^0(S,\ell C+mH).
\]
Since the $P_i$'s are very general, $\cO_C(C)$ is a non-torsion line bundle of degree zero. It follows that $h^0(C,\ell C)=0$ and $h^0(C,\ell C+H)>0$ for all $\ell \in \bN$. Since $H$ is sufficiently ample we also have $h^1(S,\ell C+H)=0$ by Fujita vanishing. Using the two exact sequences 
\[
0\to \cO_S((\ell-1)C)\to \cO_S(\ell C)\to \cO_C(\ell C)\to 0,
\]
\[
0\to \cO_S((\ell-1)C+H)\to \cO_S(\ell C+H)\to \cO_C(\ell C+H)\to 0,
\]
we see that $\ell C$ is the only member of the linear system $|\ell C|$, and $h^0(S,\ell C+H)>h^0(S,(\ell-1)C+H)$. They together imply that a new generator is need for $H^0(S,\ell C+H)$ for each $\ell$ and therefore $R(X,-K_X)$ cannot be finitely generated.
\end{expl}

Another natural question is whether Theorem \ref{thm:main} and Theorem \ref{thm:plt} generalizes to pairs with more general coefficients.

\begin{que} \label{q:pairs}
Let $f\colon Y\to X$ be a pure morphism between affine varieties, let $\Delta$ be an effective $\bQ$-divisor on $X$, and let $\Delta_Y=f^* \Delta$ be the cycle-theoretic pullback. Assume that $(Y,\Delta_Y)$ is of klt (resp. lc) type. Is $(X,\Delta)$ also of klt (resp. lc) type?\footnote{Postscript note: the klt case of this question has been answered by \cite{TY-adj-ideal}*{Corollary 1.3} in the positive. They also give a partial affirmative answer \cite{TY-adj-ideal}*{Theorem 1.4} for the log canonical case.}
\end{que}

It would be interesting to know if there is some local version of the canonical bundle formula for pure morphisms. If so, it may provide answers to both Question \ref{q:lc case} and Question \ref{q:pairs}.

\appendix

\section{Pure images of klt type excellent schemes}

\centerline{Shiji Lyu}

\medskip

In this appendix, we explain how to extend the previous results to morphisms between excellent schemes admitting dualizing complexes, and slightly further. 

\subsection{Main theorems for non-finite-type schemes}
In this subsection we prove Theorems \ref{thm:main} and \ref{thm:plt}
for excellent schemes admitting dualizing complexes.

Let $X$ be a Noetherian excellent scheme of equal characteristic zero that admits a dualizing complex. 
We say $(X,\Delta)$ is \emph{of klt (resp. plt) type}
if Zariski locally on $X$ there exists a $\mathbb{Q}$-divisor $D\geq 0$ such that $(X,\Delta+D)$ is klt (resp. plt).

Since log resolutions exist \cite{Tem08-resol}*{Theorem 2.3.6},
being klt or plt can be detected using a single log resolution,
thus
$(X,\Delta)$ is {of klt (resp. plt) type}
if and only if for all $x\in X$, $(\Spec(\cO_{X,x}),\Delta|_{\Spec(\cO_{X,x})})$ is {of klt (resp. plt) type}.

We say that a morphism of Noetherian schemes $f:Y\to X$ is \emph{pure}
if for all $x\in X$,
there exists $y\in Y$ such that $f(y)=x$ and
$\cO_{X,x}\to\cO_{Y,y}$ is pure.
If $Y=\Spec(B)$ and $X=\Spec(A)$ are affine, $f$ is pure if and only if $A\to B$ is pure,
see \cite{HH95-lemmaforpure}*{Lemma 2.2}.

Here is our extension of Theorems \ref{thm:main} and \ref{thm:plt}.
\begin{thm}\label{thm:klt plt excellent}
Let $f:Y\to X$ be a pure morphism between Noetherian schemes of equal characteristic zero.
Assume that both $X$ and $Y$ are excellent and admit dualizing complexes. 
Then the followings hold.
\begin{enumerate}
    \item Assume that $Y$ is of klt type. Then $X$ is also of of klt type.
    \item Let $P$ be a prime divisor on $X$, and let $Q$ be the divisorial part of the scheme-theoretic pullback $f^{-1}(P)$.
    If $(Y,Q)$ is of plt type, then $(X,P)$ is also of plt type.
\end{enumerate}
\end{thm}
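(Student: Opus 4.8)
The plan is to reduce Theorem \ref{thm:klt plt excellent} to the finite-type statements already established, namely Theorem \ref{thm:main} and Theorem \ref{thm:plt}, by a combination of localization and approximation arguments. Since being of klt (resp. plt) type can be checked on the local rings $\cO_{X,x}$, and purity of $f$ guarantees for each $x\in X$ a point $y\in Y$ over it with $\cO_{X,x}\to\cO_{Y,y}$ pure, the first step is to replace $f$ by the local homomorphism $\cO_{X,x}\to\cO_{Y,y}$. Thus we reduce to the case where $A\to B$ is a pure local homomorphism of excellent local $\bk$-algebras of equal characteristic zero admitting dualizing complexes, with $B$ of klt type, and we want $A$ of klt type.

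Next I would use Néron--Popescu desingularization (or Artin--Popescu approximation) together with Temkin's resolution to descend the klt-type hypothesis to a finite-type situation. Concretely, write $B$ as a filtered colimit of smooth (hence regular) $\bk$-algebras, or more usefully, observe that since $B$ is excellent of klt type there is, locally, an effective $\bQ$-divisor $D$ on $\Spec B$ with $(\Spec B, D)$ klt, and a log resolution of $(\Spec B,D)$ which, being of finite type over $B$, descends to a finite-type $\bk$-subalgebra $B_0\subseteq B$ such that $B=\varinjlim B_\lambda$ is a filtered colimit of smooth $B_0$-algebras. Then $(\Spec B_0, D_0)$ is klt for a suitable descent $D_0$ of $D$, so $\Spec B_0$ is of klt type as a finite-type $\bk$-scheme. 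One must also arrange that $A$ descends compatibly: take a finite-type $\bk$-subalgebra $A_0\subseteq A$ mapping into $B_0$, enlarging $A_0$ and $B_0$ as needed, so that $A_0\to B_0$ is a morphism of affine finite-type $\bk$-varieties and $A=\varinjlim A_\lambda$ with $A_\lambda\to B_\lambda$ smooth over $A_0\to B_0$ in the appropriate sense.

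The crux is transferring purity: I need $A_0\to B_0$ (or some finite-type layer $A_\lambda\to B_\lambda$) to be pure, knowing only that $A\to B$ is pure. Purity of $A\to B$ means $M\to M\otimes_A B$ is injective for all $A$-modules $M$; by Lemma \ref{lem:purity criterion} this is equivalent to $A$ being a direct summand of every finitely generated $A$-submodule of $B$ containing it. Since such submodules and their splittings involve only finitely many elements and relations, they descend to some finite stage; the subtlety is that one needs this for all such submodules simultaneously, which requires either a Noetherian/finiteness reduction (it suffices to split $A\hookrightarrow B$ against a single well-chosen finitely generated module, e.g. using the approach via the canonical module or a test-module argument in the excellent setting) or an appeal to the fact that purity of $A\to B$ is implied by purity of the completion, combined with the structure of $\hat A\to\hat B$. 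I expect this descent of purity to be the main obstacle, and the cleanest route is probably to show directly that for a filtered colimit $A=\varinjlim A_\lambda$ with $A\to B$ pure and $B$ module-finite-type-approximated appropriately, some $A_\lambda\to B$ and hence some $A_\lambda\to B_\mu$ is pure, using that $\Hom_A(B,A)\otimes$ and the evaluation-at-$1$ map commute with filtered colimits.

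Once $A_0\to B_0$ is a pure morphism of affine finite-type $\bk$-varieties with $B_0$ of klt type, Theorem \ref{thm:main} gives that $A_0$ is of klt type, and then because $A$ is obtained from $A_0$ by a regular (in particular smooth, hence log-crepant-compatible) base change, $(\Spec A,$ pullback of a klt boundary$)$ remains klt, so $A$ is of klt type; this proves (1). For (2), one runs the same reduction, noting that the divisorial part $Q$ of the scheme-theoretic pullback descends to the cycle-theoretic pullback $Q_0=f_0^*P_0$ of the descended prime divisor $P_0$ (the non-divisorial discrepancy between scheme-theoretic $f^{-1}(P)$ and cycle-theoretic $f^*P$ can be absorbed since plt-type is insensitive to adding such lower-dimensional data after passing to an appropriate big open set, exactly as in the proof of Theorem \ref{thm:plt} where one first arranges $f^{-1}(X_{\sing})$ to have codimension at least two), so that $(Y_0,Q_0)$ is of plt type; then Theorem \ref{thm:plt} yields $(X_0,P_0)$ of plt type, and regular base change preserves plt-type to conclude $(X,P)$ is of plt type.
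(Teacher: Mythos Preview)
Your approach is genuinely different from the paper's. The paper does not attempt to reduce to the finite-type case at all; instead it reruns the entire proof of Theorems \ref{thm:main} and \ref{thm:plt} directly in the excellent setting, replacing each ingredient by its known excellent-scheme analogue: Temkin's resolution \cite{Tem08-resol}, Murayama's vanishing \cite{Mur-vanishing} (which supplies both the analogue of \cite{Bou-summand-rational} and Grauert--Riemenschneider), the MMP package for excellent schemes from \cite{LM-MMP} ($\bQ$-factorializations, finite generation of adjoint rings, Bertini), and inversion of adjunction via the connectedness theorem. No approximation or descent is used; the point is simply that every tool invoked in Section 2 already has an excellent-scheme version.

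Your approximation strategy has a real gap at exactly the spot you flag: descending purity to a finite-type stage. Purity of $A\to B$ is a condition quantified over \emph{all} $A$-modules; while any single splitting of a fixed finitely generated $A$-submodule of $B$ descends to some stage $A_\lambda\to B_\mu$, there is no mechanism to pick one stage that works uniformly. Your suggestion that ``$\Hom_A(B,A)\otimes$ and evaluation-at-$1$ commute with filtered colimits'' does not help, because $B$ is not assumed module-finite over $A$, so $\Hom_A(B,-)$ does not commute with colimits. Even when $A\to B$ actually splits, a splitting $s\colon B\to A$ need not restrict to an $A_\lambda$-linear map $B_\mu\to A_\lambda$ for any choice of $\lambda,\mu$. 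I do not see a way to repair this short of reproving the intermediate lemmas in the excellent setting---which is precisely the paper's route.

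There is also a secondary bookkeeping issue you glide over: Theorems \ref{thm:main} and \ref{thm:plt} are stated for varieties over an algebraically closed field, so after descent you would still have finite-type $\bQ$-algebras and would need to base-change to some $\bar\bk$, check purity survives (it does, by faithfully flat base change), and then ascend klt type back along the regular map $A_0\to A$. This is manageable, and you also need to arrange simultaneously that $A_0\to A$ and $B_0\to B$ are regular while $\Spec B_0$ is of klt type and $A_0\to B_0$ is pure. Each of these is delicate, and combined with the purity-descent gap the approximation route looks substantially harder than the paper's direct method.
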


We also remark that Corollary \ref{cor:equidim image} can be extended as well.

\begin{cor}\label{cor:equidimensional excellent}
Let $f:Y\to X$ be an equidimensional morphism of finite type between Noetherian schemes of equal characteristic zero. 
Assume that $X$ is normal and excellent, and that $X$ admits dualizing complexes.
%
%
Then the followings hold.
\begin{enumerate}
    \item Assume that $Y$ is of klt type. Then $X$ is also of of klt type.
    \item Let $P$ be a prime divisor on $X$, and let $Q$ be the divisorial part of the scheme-theoretic pullback $f^{-1}(P)$.
    If $(Y,Q)$ is of plt type, then $(X,P)$ is also of plt type.
\end{enumerate}
\end{cor}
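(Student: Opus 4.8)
The plan is to reduce the equidimensional case to the pure case (Theorem \ref{thm:klt plt excellent}), exactly mirroring the reduction used for Corollary \ref{cor:equidim image} in the finite-type setting, but being careful about which hypotheses survive in the excellent world. The main point is that an equidimensional morphism of finite type onto a normal excellent base is automatically pure. First I would recall the statement \cite{CGM-pure-subring}*{Lemma 2.7}: for a finite-type equidimensional morphism $f\colon Y\to X$ with $X$ normal (and Noetherian), the induced map on local rings $\cO_{X,x}\to \cO_{Y,y}$ is pure for a suitable choice of $y$ over each $x$. Since purity of $\cO_{X,x}\to\cO_{Y,y}$ is what the definition of a pure morphism of Noetherian schemes requires, this shows $f$ is pure in the sense of the appendix. (One should check that the cited lemma, or its proof, does not secretly use excellence or finite type over a field; it is a statement about Noetherian rings and a Cohen--Macaulay-type going-down argument via local cohomology, so it applies verbatim.)

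Next I would verify the hypotheses of Theorem \ref{thm:klt plt excellent} for $f$. We are given that $X$ is normal, excellent, and admits a dualizing complex. We need the same for $Y$. Excellence of $Y$ follows because $f$ is of finite type and excellence is stable under finite-type morphisms. The existence of a dualizing complex on $Y$ follows because $f$ is of finite type and dualizing complexes pull back along finite-type morphisms (Hartshorne, \emph{Residues and Duality}, or Stacks Project tag 0AU3): if $\omega_X^\bullet$ is dualizing on $X$ then $f^!\omega_X^\bullet$ is dualizing on $Y$. Also $Y$ has equal characteristic zero since $X$ does and $f$ is a morphism over $X$. Finally, normality of $Y$ is not among the hypotheses of Corollary \ref{cor:equidimensional excellent}, but it is also not needed: the hypothesis of Theorem \ref{thm:klt plt excellent} is only that $Y$ be of klt type (resp.\ that $(Y,Q)$ be of plt type), and a scheme of klt type is automatically normal. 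So once we are handed "$Y$ is of klt type'' or "$(Y,Q)$ is of plt type'' as in (1) and (2), normality of $Y$ comes for free.

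With $f$ pure and both $X$, $Y$ excellent admitting dualizing complexes, part (1) is immediate from Theorem \ref{thm:klt plt excellent}(1), and part (2) from Theorem \ref{thm:klt plt excellent}(2), noting that in the finite-type equidimensional setting the divisorial part $Q$ of $f^{-1}(P)$ is precisely the cycle-theoretic pullback, so the two formulations of the plt hypothesis agree. I expect the only real obstacle to be bookkeeping rather than mathematics: one must make sure the cited purity criterion \cite{CGM-pure-subring}*{Lemma 2.7} is genuinely available for Noetherian (not necessarily finite-type-over-a-field) normal base rings, and that "pure morphism'' in the sense of the appendix (purity of the map on local rings at a suitable point over each $x$) is exactly what that lemma delivers. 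If there is any gap there, the fallback — as in the proof of Corollary \ref{cor:equidim image} — is to use Bertini/hyperplane-section arguments to reduce to the quasi-finite case and invoke a canonical-bundle-formula statement; but that route is more delicate over excellent schemes, so I would first try to push the purity argument through directly.
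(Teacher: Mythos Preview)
Your proposal is correct and follows the same strategy as the paper: check that finite-type morphisms preserve excellence and the existence of dualizing complexes, show that $f$ is pure, and apply Theorem \ref{thm:klt plt excellent}. The only difference is in the purity step. Rather than citing \cite{CGM-pure-subring}*{Lemma 2.7} and worrying (as you rightly do) about whether its proof goes through over an arbitrary Noetherian normal base, the paper argues directly: by \cite{EGA4_3}*{Proposition 13.3.1}, an equidimensional finite-type morphism factors locally as $Y\xrightarrow{g}\mathbb{A}^e_X\to X$ with $g$ quasi-finite and $e=\dim Y-\dim X$; since $\mathbb{A}^e_X$ is normal and of equal characteristic zero, $g$ is pure, and composing with the faithfully flat projection $\mathbb{A}^e_X\to X$ gives purity of $f$. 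This sidesteps your bookkeeping concern entirely and avoids the Bertini fallback you mention.
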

\begin{proof}
A finite type map preserves being excellent and admitting dualizing complexes.
Thus it suffices to show $f$ is pure.

By \cite{EGA4_3}*{Proposition 13.3.1}, $f$ factors locally as
$Y\xrightarrow{g}\mathbb{A}^{e}_X\to X$
where $g$ is quasi-finite and $e=\dim Y-\dim X$.
Since $\mathbb{A}^{e}_X$ is normal and of equal characteristic zero, $g$ is pure, thus so is $f$.
\end{proof}

We cannot readily extend Corollary \ref{cor:reductive quotient}, 
since we do not know if excellence and dualizing complexes can be descended from $A$ to $A^G$.
However, see Corollary \ref{cor:reductive quotient general} below.\\

We now turn to the proof of Theorem \ref{thm:klt plt excellent}.
We will not give all the details in our case, since the argument will be completely parallel to the proof of Theorems \ref{thm:main} and \ref{thm:plt}.
We will only indicate which parts of the argument need to be modified in our situation.

In the proof of Lemma \ref{lem:special case - Q-Gor base},
\cite{KM98}*{Proposition 5.20} works with no problem in our case, and the reference \cite{Bou-summand-rational}
can be replaced by \cite{Mur-vanishing}*{Theorem C}.
Proper birational map from a regular scheme satisfies
Grauert-Riemenschneider \cite{Mur-vanishing}*{Theorem A}.
Thus one can apply the argument in \cite{Kol97-rational}*{\S 11}
to see that \cite{KM98}*{Theorem 5.22 and Corollary 5.24} hold in our case.
%
%

In the proof of Lemma \ref{lem:klt type criterion} (for klt type),
\cite{KM98}*{Lemma 6.2} works with no problem,
and the required Bertini theorem is \cite{LM-MMP}*{Corollary 10.4}. 
We still need to prove Lemma \ref{lem:ample model of Weil div}.
We follow the proof of \cite{Z-mld^K-1}*{Lemma 4.7}.
The existence of $\mathbb{Q}$-factorialization
is \cite{LM-MMP}*{Corollary 22.3}.
Small perturbation of a klt pair is klt \cite{LM-MMP}*{Lemma 6.9},
and the log canonical model exists due to the finite generation of relative adjoint rings \cite{LM-MMP}*{Theorem 17.3}.

The proof of Lemmas \ref{lem:Spec H^0(open in klt) still klt}, \ref{lem:R(-K_X) fg} and \ref{lem:Q-Gor model klt} works verbatim.
At this point, we have proved
statement (1) of Theorem \ref{thm:klt plt excellent}.
%
%



Let us now consider statement (2).
Again, we follow the proof of Theorem \ref{thm:plt}.
The first step is to show $Y$ of klt type, so $X$ of klt type.
It suffices to prove \cite{KM98}*{Proposition 2.43} for every affine local excellent scheme $X$ 
of equal characteristic zero that admits a dualizing complex.
We can, for simplicity, assume $H=0$.
We use the same argument as in \cite{KM98},
but we write out much of the details
since we do not have a Bertini theorem stated in \cite{LM-MMP} for the linear system of a Weil divisor.
Since our $X$ is local and excellent,
by Hironaka's resolution of singularities,
there is a log resolution
$\pi:X'\to X$ that is an iterated blow-up of regular centers disjoint from $X\setminus Z$.
In particular, there is a $\pi$-ample $\pi$-exceptional Cartier divisor $H'(\leq 0)$ on $X'$.
Then $aH'+\pi^{-1}_*m\Delta_1$ is $\pi$-generated \cite{LM-MMP}*{Definition 4.1} for some positive integer $a$.
Some member $D'\in |aH'+\pi^{-1}_*m\Delta_1|$
then satisfies $(X',\pi^{-1}_*\Delta+D')$ snc
by \cite{LM-MMP}*{Theorem 10.1 and Remark 10.2},
so $D:=\pi_*D'$
is such that $D\sim m\Delta_1$ and that $(X\setminus Z,(\Delta+D)|_{X\setminus Z})$ is snc.
We can then argue as in the second last paragraph of the proof of
\cite{KM98}*{Proposition 2.43}. 

We now need Lemma \ref{lem:klt type criterion} for plt type.
Since we know \cite{KM98}*{Proposition 2.43} in our case,
we can apply the same proof for the klt case,
except for the Bertini theorem.
\cite{LM-MMP}*{Corollary 10.4} holds for plt instead of klt by a similar proof,
which is what we want.
(Alternatively, one can use inversion of adjunction as noted below.)

Now we can follow the argument until the last paragraph.
For the last step of the proof, inversion of adjunction holds in our case since resolutions exist and the connectedness theorem \cite{KM98}*{Theorem 5.48} holds, the latter depending (only) on Kawamata–Viehweg vanishing \cite{Mur-vanishing}*{Theorem A}.
We have now proved Theorem \ref{thm:klt plt excellent}.

\subsection{Schemes formally of klt or plt type}

For a general Noetherian scheme $X$ of equal characteristic zero,
we say $(X,\Delta)$ is \emph{formally of klt (resp. plt) type}
if for all $x\in X$,
the completion $A=\cO_{X,x}^\wedge$ is normal and
$(\Spec(A),\Delta|_{\Spec(A)})$ is of klt (resp. plt) type. 
Here, for a prime divisor $P$ on $X$,
$P|_{\Spec(A)}$ is the divisorial part of $P\times_X \Spec(A)$
and  $\Delta|_{\Spec(A)}$ is defined by linearity.
We note that $\Spec(A)$ is excellent and admits a dualizing complex since $A$ is complete.

If $X$ is excellent and admits a dualizing complex, and $(X,\Delta)$ is of klt (resp. plt) type, 
then $(X,\Delta)$ is formally of klt (resp. plt) type 
(cf. \cite{Kol13}*{Proposition 2.15}).
The converse holds when $\Delta=0$ (resp. $\Delta$ is a prime divisor) 
by our Theorem \ref{thm:klt plt excellent}.\footnote{The author thanks Zhiyuan Chen for pointing this out to me.
The author does not know if a general pair $(X,\Delta)$ being formally of klt (resp. plt) type
implies $(X,\Delta)$ being of klt (resp. plt) type, 
even when $X$ is excellent and admits a dualizing complex.
}

Now, if $A\to B$ is a pure local map of Noetherian local rings,
then
by \cite{Fed83-complete}*{Proposition 1.3(5)}
it is clear that $A^\wedge\to B^\wedge$ is pure.
Let $P$ is a prime divisor on $\Spec(A)$ and let $Q$ (resp. $P^\wedge$, $Q^\wedge$) be 
the divisorial part of its scheme-theoretic pullback to $\Spec(B)$ (resp. $\Spec(A^\wedge)$, $\Spec(B^\wedge)$).
If $(\Spec(B^\wedge),Q^\wedge)$ is of plt type,
then $Q^\wedge$ must be a prime divisor, thus so is $P^\wedge$.
Therefore Theorem \ref{thm:klt plt excellent} gives
\begin{thm}\label{thm:formal klt or plt}
Let $f:Y\to X$ be a pure morphism between Noetherian schemes of equal characteristic zero. 
Then the followings hold.

\begin{enumerate}
    \item Assume that $Y$ is formally of klt type. Then $X$ is also formally of of klt type.
    \item Let $P$ be a prime divisor on $X$, and let $Q$ be the divisorial part of the scheme-theoretic pullback $f^{-1}(P)$.
    If $(Y,Q)$ is formally of plt type, then $(X,P)$ is also formally of plt type.
\end{enumerate}
\end{thm}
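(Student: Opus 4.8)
The plan is to reduce to completed local rings, where Theorem \ref{thm:klt plt excellent} is already available. Fix $x\in X$ and set $A=\cO_{X,x}$. Since $f$ is pure, by definition there is a point $y\in Y$ with $f(y)=x$ for which the local homomorphism $A\to B:=\cO_{Y,y}$ is pure. The key preliminary observation is that purity is inherited by completions: by \cite{Fed83-complete}*{Proposition 1.3(5)}, the map $A^{\wedge}\to B^{\wedge}$ is again pure, hence by \cite{HH95-lemmaforpure}*{Lemma 2.2} the induced morphism $\Spec B^{\wedge}\to\Spec A^{\wedge}$ is pure. Both $\Spec A^{\wedge}$ and $\Spec B^{\wedge}$ are spectra of complete Noetherian local rings of equal characteristic zero, so they are excellent and admit dualizing complexes, and Theorem \ref{thm:klt plt excellent} applies to this morphism.

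For part (1), I would argue as follows: by hypothesis $B^{\wedge}=\cO_{Y,y}^{\wedge}$ is normal and $\Spec B^{\wedge}$ is of klt type, so Theorem \ref{thm:klt plt excellent}(1) gives that $\Spec A^{\wedge}$ is of klt type; in particular $A^{\wedge}$ is normal. Since $x$ was arbitrary, $X$ is formally of klt type.

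For part (2), I would first introduce $P^{\wedge}$ (resp. $Q^{\wedge}$) as the divisorial part of the scheme-theoretic pullback of $P$ to $\Spec A^{\wedge}$ (resp. of $Q$ to $\Spec B^{\wedge}$); flatness of $\cO_{Y,y}\to\cO_{Y,y}^{\wedge}$ identifies $Q^{\wedge}$ with the divisorial part of the scheme-theoretic pullback of $P^{\wedge}$ along $\Spec B^{\wedge}\to\Spec A^{\wedge}$, so that $P^{\wedge}$ and $Q^{\wedge}$ are exactly the data appearing in Theorem \ref{thm:klt plt excellent}(2). By hypothesis $(\Spec B^{\wedge},Q^{\wedge})$ is of plt type; a plt pair forces $Q^{\wedge}$ to be a prime divisor, and then purity (hence surjectivity on spectra) of $\Spec B^{\wedge}\to\Spec A^{\wedge}$ forces $P^{\wedge}$ to be a prime divisor as well. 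Thus Theorem \ref{thm:klt plt excellent}(2) applies and shows $(\Spec A^{\wedge},P^{\wedge})$ is of plt type, which also records that $A^{\wedge}$ is normal; hence $(X,P)$ is formally of plt type.

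There is no substantial obstacle here: once purity is known to survive $\fm$-adic completion and completions of equicharacteristic-zero Noetherian local rings are recognized as excellent with dualizing complexes, the statement is a formal consequence of Theorem \ref{thm:klt plt excellent}. The only point deserving care is the passage, in part (2), from the plt-type hypothesis on $(Y,Q)$ to the primality of $P^{\wedge}$ — this is exactly where one needs plt rather than merely klt, since $P$ being a prime divisor on $X$ does not by itself make $P^{\wedge}$ prime on $\Spec A^{\wedge}$.
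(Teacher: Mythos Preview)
Your proposal is correct and follows exactly the paper's own argument: localize at a pure point, pass to completions using \cite{Fed83-complete}*{Proposition 1.3(5)}, observe that complete local rings are excellent with dualizing complexes, and invoke Theorem \ref{thm:klt plt excellent}; for part (2) you, like the paper, deduce primality of $P^{\wedge}$ from primality of $Q^{\wedge}$. You have in fact supplied more detail than the paper does (the compatibility of $Q^{\wedge}$ with the pullback of $P^{\wedge}$, and the reason plt forces $Q^{\wedge}$ prime), but the route is identical.
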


Now we are able to extend Corollary \ref{cor:reductive quotient}.
\begin{cor}\label{cor:reductive quotient general}
Let $k$ be a field of characteristic zero, $G$ a reductive $k$-group, $A$ a Noetherian $k$-algebra that admits a $k$-rational $G$-action.
Then the followings hold.
\begin{enumerate}
    \item Assume that $\Spec(A)$ is formally of klt type. Then $\Spec(A^G)$ is also formally of klt type.
    \item Let $P$ be a prime divisor on $\Spec(A^G)$, and let $Q$ be the divisorial part of the scheme-theoretic pullback $P\times_{\Spec(A^G)}\Spec(A)$.
    If $(\Spec(A),Q)$ is formally of plt type, then $(\Spec(A^G),P)$ is also formally of plt type.
\end{enumerate}

\end{cor}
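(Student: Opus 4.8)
The plan is to reduce to Theorem \ref{thm:formal klt or plt} by exhibiting $A^G \to A$ as a pure map and then invoking the formal version of the result, which (crucially) does not require excellence or dualizing complexes on the base. First I would recall that since $G$ is reductive and $\mathrm{char}(k) = 0$, every $G$-module is semisimple, so the inclusion $A^G \hookrightarrow A$ is split as a map of $A^G$-modules by the Reynolds operator; in particular $A^G$ is a direct summand of $A$, hence a pure subring. Since $A$ is a Noetherian $k$-algebra with a rational $G$-action, a standard argument (Nagata's finiteness theorem) gives that $A^G$ is again a Noetherian $k$-algebra, so $\Spec(A^G)$ is a Noetherian scheme of equal characteristic zero and the map $f\colon \Spec(A) \to \Spec(A^G)$ is a morphism of such schemes to which Theorem \ref{thm:formal klt or plt} applies directly.

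For part (1), applying Theorem \ref{thm:formal klt or plt}(1) to the pure morphism $f$ immediately yields that $\Spec(A^G)$ is formally of klt type once we know $\Spec(A)$ is. For part (2), I would check that the prime divisor $P$ on $\Spec(A^G)$ and the divisorial part $Q$ of the scheme-theoretic pullback $P \times_{\Spec(A^G)} \Spec(A)$ match exactly the data required in Theorem \ref{thm:formal klt or plt}(2); then the hypothesis that $(\Spec(A), Q)$ is formally of plt type gives, via that theorem, that $(\Spec(A^G), P)$ is formally of plt type.

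The only genuine subtlety — and the step I would spell out carefully — is verifying that $f$ is pure in the sense used in the appendix, namely that for every $x \in \Spec(A^G)$ there is a point $y \in \Spec(A)$ lying over $x$ with $\cO_{X,x} \to \cO_{Y,y}$ pure; but this is exactly the content of \cite{HH95-lemmaforpure}*{Lemma 2.2} combined with the global splitting of $A^G \hookrightarrow A$, so purity of the ring map gives purity of the morphism. Beyond this, everything is formal: the appendix's framework was designed precisely so that the reductive quotient statement follows without needing to descend excellence or the existence of a dualizing complex from $A$ to $A^G$, which was the obstruction preventing a direct extension of Corollary \ref{cor:reductive quotient}. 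I therefore expect no serious obstacle — the work is entirely in assembling the pieces (Reynolds operator $\Rightarrow$ purity; Nagata $\Rightarrow$ Noetherian; then quote Theorem \ref{thm:formal klt or plt}).
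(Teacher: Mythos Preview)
Your approach matches the paper's: show $A^G \hookrightarrow A$ is split (hence pure) via the Reynolds operator, verify that $A^G$ is Noetherian, and then invoke Theorem \ref{thm:formal klt or plt}. One correction is needed in the Noetherianness step: Nagata's finiteness theorem applies only when $A$ is \emph{finitely generated} over $k$, which is not assumed here (only that $A$ is Noetherian). The paper instead deduces Noetherianness of $A^G$ directly from purity (cf.\ the proof of Lemma \ref{lem:R(-K_X) fg}): for any ideal $I\subseteq A^G$, applying purity to $M=A^G/I$ gives $IA\cap A^G=I$, so an ascending chain of ideals in $A^G$ extends to a chain in $A$ that stabilizes, and contracting back shows the original chain stabilizes. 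With this fix your argument is complete and identical to the paper's.
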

\begin{proof}
The map $A^G\to A$ is pure, in fact split, see for example \cite{HR-quotient-CM}*{\S 10}.
Thus $A^G$ is Noetherian (cf. the proof of Lemma \ref{lem:R(-K_X) fg}) 
and Theorem \ref{thm:formal klt or plt} applies.
\end{proof}

Again, we have a version of Corollary \ref{cor:equidim image}.
The proof is the same as Corollary \ref{cor:equidimensional excellent}.
\begin{cor}\label{cor:equidimensionalnon excellent}
Let $f:Y\to X$ be an equidimensional surjective morphism of finite type between Noetherian schemes of equal characteristic zero. 
Assume that $X$ is normal. 
%
%
Then the followings hold.
\begin{enumerate}
    \item Assume that $Y$ is formally of klt type. Then $X$ is also formally of klt type.
    \item Let $P$ be a prime divisor on $X$, and let $Q$ be the divisorial part of the scheme-theoretic pullback $f^{-1}(P)$.
    If $(Y,Q)$ is formally of plt type, then $(X,P)$ is also formally of plt type.
\end{enumerate}
\end{cor}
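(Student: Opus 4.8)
The plan is to follow the proof of Corollary \ref{cor:equidimensional excellent} essentially verbatim, replacing the appeal to Theorem \ref{thm:klt plt excellent} by an appeal to Theorem \ref{thm:formal klt or plt}. The key observation is that the hypotheses of excellence and existence of a dualizing complex, which were present in Corollary \ref{cor:equidimensional excellent}, are no longer needed here: being \emph{formally} of klt (resp. plt) type is tested on the completions $\cO_{X,x}^\wedge$, which are complete Noetherian local rings and therefore automatically excellent and equipped with dualizing complexes. Hence, once we know that $f$ is pure, Theorem \ref{thm:formal klt or plt} applies directly and yields both statements (1) and (2), including the assertion that $Q$ is the divisorial part of $f^{-1}(P)$.

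So the entire content of the argument is to check that an equidimensional finite type morphism $f\colon Y\to X$ with $X$ normal is pure. As purity is local on $X$, we may assume $X=\Spec A$ with $A$ a normal Noetherian local ring. By \cite{EGA4_3}*{Proposition 13.3.1}, after localizing on $Y$ the morphism $f$ factors as $Y\xrightarrow{g}\mathbb{A}^e_X\to X$ with $g$ quasi-finite and $e=\dim Y-\dim X$, the equidimensionality of $f$ guaranteeing that this relative dimension is constant. Since $X$ is normal, $\mathbb{A}^e_X$ is again normal, and since we work in equal characteristic zero a quasi-finite morphism onto a normal scheme is pure (exactly as in Corollary \ref{cor:equidimensional excellent}; alternatively, one may reduce to the finite case by Zariski's main theorem and cite \cite{FG-cbf}*{Lemma 1.1}, the splitting being available because the relevant extension of normal domains is separable). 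Therefore $g$, and hence the composite $f$, is pure.

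I do not expect a genuine obstacle, as the argument is formally identical to that of Corollary \ref{cor:equidimensional excellent}. The one point deserving attention is verifying that the purity of $g$ can be established without invoking excellence or a dualizing complex on $X$; but the factorization of \cite{EGA4_3}*{Proposition 13.3.1} together with the trace/separability splitting uses nothing beyond normality of $X$ and equal characteristic zero. With $f$ known to be pure, the two conclusions of the corollary follow at once from Theorem \ref{thm:formal klt or plt}.
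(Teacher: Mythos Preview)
Your proposal is correct and matches the paper's own proof, which simply states that the argument is the same as for Corollary~\ref{cor:equidimensional excellent}: factor $f$ locally as a quasi-finite map to $\mathbb{A}^e_X$ via \cite{EGA4_3}*{Proposition 13.3.1}, use normality of $\mathbb{A}^e_X$ in equal characteristic zero to get purity, and then invoke Theorem~\ref{thm:formal klt or plt} in place of Theorem~\ref{thm:klt plt excellent}. Your additional remark that excellence and dualizing complexes are not needed because ``formally of klt/plt type'' is tested on completions is exactly the reason the argument goes through unchanged.
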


\bibliography{ref}

\end{document}